\newtheorem{theorem}{Theorem}[section]
\newtheorem{definition}{Definition}[section]
\newtheorem{example}{Example}[section]
\newtheorem{remark}{Remark}[section]
\numberwithin{equation}{section}
\begin{document}
\date{{\scriptsize Received: July 2021, Accepted: .}}
\title[$\varphi $-fixed points]{$\varphi $-fixed points of self-mappings on
metric spaces with a geometric viewpoint}
\author[N. \"{O}ZG\"{U}R]{Nihal \"{O}ZG\"{U}R}
\address{Bal\i kesir University, Department of Mathematics, 10145 Bal\i
kesir, Turkey}
\email{nihal@balikesir.edu.tr}
\author[N. TA\c{S}]{Nihal TA\c{S}}
\address{Bal\i kesir University, Department of Mathematics, 10145 Bal\i
kesir, Turkey}
\email{nihaltas@balikesir.edu.tr}
\maketitle

\begin{abstract}
A recent open problem was stated on the geometric properties of $\varphi $%
-fixed points of self-mappings of a metric space in the non-unique fixed
point cases. In this paper, we deal with the solutions of this open problem
and present some solutions via the help of appropriate auxiliary numbers and
geometric conditions. We see that a zero of a given function $\varphi $ can
produce a fixed circle (resp. fixed disc) contained in the fixed point set
of a self-mapping $T$ on a metric space. Moreover, this circle (resp. fixed
disc) is also contained in the set of zeros of the function $\varphi $.

\textbf{Keywords:} $\varphi $-fixed point, $\varphi $-fixed circle, $\varphi
$-fixed disc discontinuity.

\textbf{MSC(2010):} Primary: 54H25; Secondary: 47H10, 55M20.
\end{abstract}

\section{\textbf{Introduction and Motivation}}

\label{intro} Let $T$ $:X\rightarrow X$ be a self-mapping on a metric space $%
(X,d)$. We denote the fixed point set of $T$ by $Fix(T)$, that is, we have
\begin{equation*}
Fix(T)=\left\{ x\in X:Tx=x\right\} \text{.}
\end{equation*}%
In this paper, mainly, we study on the geometric properties of the set $%
Fix(T)$ in the cases where this set is not a singleton. Recently, geometric
properties of non-unique fixed points have been extensively studied by
various aspects, for example, in the context of the fixed-circle problem,
fixed-disc problem and so on (see \cite{Mlaiki arxiv, Ozgur-malaysian,
Ozgur-Aip, Ozgur-chapter, Ozgur-fixed disc, Ozgur-simulation, Ozgur 2021,
Tas 2020} and the references therein).

Auxiliary numbers are useful tools for the fixed-point studies. In a recent
study, new solutions to the Rhoades' well-known problem related to
discontinuity at the fixed point were investigated using the auxiliary
number $M(x,y)$ defined by%
\begin{equation}
M(x,y)=\max \left\{ d(x,y),d(x,Tx),d(y,Ty),\left[ \frac{d(x,Ty)+d(y,Tx)}{%
1+d(x,Tx)+d(y,Ty)}\right] d(x,y)\right\} \text{,}  \label{M(x,y)}
\end{equation}%
for all $x,y\in X$ (see \cite{Ozgur 2021} for more details). Furthermore,
the number $M(x,y)$ and its modified versions were used to give some
fixed-point results, a common fixed-point theorem and an application to the
fixed-circle problem. It was pointed out that new results on the geometric
properties of the $\varphi $-fixed points of a self-mapping can be
investigated via the help of the number $M(x,y)$ (or a modified version of
it). The notion of a $\varphi $-fixed point was introduced in \cite{Jleli}.
An element $x\in X$ is said to be a $\varphi $-fixed point of the
self-mapping $T:X\rightarrow X$, where $\varphi :X\rightarrow \left[
0,\infty \right) $ is a given function, if $x$ is a fixed point of $T$ and $%
\varphi \left( x\right) =0$ \cite{Jleli}. Recently, several existence
results of $\varphi $-fixed points for various classes of operators have
been established (see for example \cite{Agarwal, Gopal, Imdad, Jleli,
Karapinar, Kumrod, Kumrod 2019, Vetro}). Following \cite{Jleli}, we denote
the set of all zeros of the function $\varphi $ by $Z_{\varphi }$, that is,
we have%
\begin{equation*}
Z_{\varphi }=\left\{ x\in X:\varphi \left( x\right) =0.\right\}
\end{equation*}%
Considering the non-unique fixed point cases, in \cite{Ozgur 2021}, the
notions of a $\varphi $-fixed circle and of a $\varphi $-fixed disc were
defined as follows:

\begin{definition}
\cite{Ozgur 2021} Let $(X,d)$ be a metric space, $T$ be a self-mapping of $X$
and $\varphi :X\rightarrow \left[ 0,\infty \right) $ be a given function.

$1)$ A circle $C_{x_{0},r}=\left\{ x\in X:d\left( x,x_{0}\right) =r\right\} $
in $X$ is said to be a $\varphi $-fixed circle of $T$ if and only if $%
C_{x_{0},r}\subseteq Fix(T)\cap Z_{\varphi }$.

$2)$ A disc $D_{x_{0},r}=\left\{ x\in X:d\left( x,x_{0}\right) \leq
r\right\} $ in $X$ is said to be a $\varphi $-fixed disc of $T$ if and only
if $D_{x_{0},r}\subseteq Fix(T)\cap Z_{\varphi }$.
\end{definition}

\begin{example}
\label{exm11} \cite{Ozgur 2021} Let $X=%
\mathbb{R}
$ be the usual metric space. Let us consider the self-mapping $T:%
\mathbb{R}
\rightarrow
\mathbb{R}
$ defined by%
\begin{equation*}
Tx=x^{4}-5x^{2}+x+4
\end{equation*}%
and the function $\varphi :%
\mathbb{R}
\rightarrow \left[ 0,\infty \right) $ defined by%
\begin{equation}
\varphi \left( x\right) =\left\vert x-1\right\vert +\left\vert
x+1\right\vert -2\text{.}  \label{phi function}
\end{equation}%
Then, we have
\begin{equation*}
Fix\left( T\right) =\left\{ x\in
\mathbb{R}
:Tx=x\right\} =\left\{ -2,-1,1,2\right\}
\end{equation*}%
and%
\begin{equation*}
Z_{\varphi }=\left\{ x\in
\mathbb{R}
:\varphi \left( x\right) =0\right\} =\left[ -1,1\right] \text{. }
\end{equation*}%
Clearly, we obtain%
\begin{equation*}
Fix\left( T\right) \cap Z_{\varphi }=C_{0,1}=\left\{ -1,1\right\} \text{,}
\end{equation*}%
that is, the circle $C_{0,1}$ is the $\varphi $-fixed circle of $T$.

On the other hand, if we consider the self-mapping $S:%
\mathbb{R}
\rightarrow
\mathbb{R}
$ defined by $Sx=x^{3}$ together with the function $\varphi \left( x\right) $
defined in $($\ref{phi function}$)$, we have $Fix\left( S\right) =\left\{
-1,0,1\right\} $ and hence
\begin{equation*}
C_{0,1}\subset Fix\left( S\right) \cap Z_{\varphi }\text{.}
\end{equation*}%
Consequently, the circle $C_{0,1}$ is the $\varphi $-fixed circle of $S$.
\end{example}

\begin{example}
\label{exm12} \cite{Ozgur 2021} Let us consider the function $\varphi \left(
x\right) $ defined in $($\ref{phi function}$)$ together with the
self-mapping $T:%
\mathbb{R}
\rightarrow
\mathbb{R}
$ defined by%
\begin{equation*}
Tx=\left\{
\begin{array}{ccc}
x & ; & \left\vert x\right\vert \leq 1 \\
x+2 & ; & \left\vert x\right\vert >1%
\end{array}%
\right. .
\end{equation*}%
Since we have $D_{0,1}=Fix\left( T\right) \cap Z_{\varphi }$, the disc $%
D_{0,1}=\left[ -1,1\right] $ is a $\varphi $-fixed disc of $T$. Notice that
the disc $D_{0,\frac{1}{2}}=\left[ -\frac{1}{2},\frac{1}{2}\right] $ is
another $\varphi $-fixed disc of $T$.
\end{example}

Considering the above examples, the investigation of the existence and
uniqueness of $\varphi $-fixed circles (resp. $\varphi $-fixed discs) seems
to be an interesting problem for various classes of self-mappings. In \cite%
{Ozgur 2021}, it was proposed the usage of the number $M(x,y)$ or a modified
version of it for the studies of this direction.

In this paper, we obtain some solutions to this mentioned problem via the
help of appropriate auxiliary numbers and geometric conditions. We see that
any zero of a given function $\varphi :X\rightarrow \left[ 0,\infty \right) $
can produce a fixed circle (resp. fixed disc) contained in the set $%
Fix\left( T\right) \cap Z_{\varphi }$ for a self-mapping $T$ on a metric
space.

\section{\textbf{$\protect\varphi $-Fixed Circle and $\protect\varphi $%
-Fixed Disc Results}}

\label{sec:1} In this section, using the number $M(x,y)$ defined in (\ref%
{M(x,y)}), the numbers $\rho $ and $\mu $ defined by%
\begin{equation}
\rho :=\inf \left\{ d\left( Tx,x\right) :x\in X,x\neq Tx\right\}
\label{the number rho}
\end{equation}%
and
\begin{equation}
\mu :=\inf \left\{ \sqrt{d\left( Tx,x\right) }:x\in X,x\neq Tx\right\} ,
\label{the number mu}
\end{equation}%
we give several $\varphi $-fixed circle (resp. $\varphi $-fixed disc)
results using various geometric conditions and techniques.

\subsection{\textbf{$\protect\varphi $-Fixed Circle (resp. $\protect\varphi $%
-Fixed Disc) Results via Type 1 $\protect\varphi _{x_{0}}$-Contractions}}

First, we define a new contraction type on a metric space.

\begin{definition}
\label{def:21} Let $(X,d)$ be a metric space, $T$ be a self-mapping of $X$
and $\varphi :X\rightarrow \left[ 0,\infty \right) $ be a given function. If
there exists a point $x_{0}\in X$ such that
\begin{equation}
d\left( Tx,x\right) >0\Rightarrow \max \left\{ d\left( x,Tx\right) ,\varphi
\left( Tx\right) ,\varphi \left( x\right) \right\} \leq k\max \left\{
d\left( x,x_{0}\right) ,\varphi \left( x\right) ,\varphi \left( x_{0}\right)
\right\} ,  \label{eq:21}
\end{equation}%
for all $x\in X$ and some $k\in \left( 0,1\right) $, then $T$ is called a
type $1$ $\varphi _{x_{0}}$-contraction.
\end{definition}

In the following theorems, we see that the number $\rho $ defined in (\ref%
{the number rho}) and the point $x_{0}$ produce a $\varphi $-fixed circle
(resp. $\varphi $-fixed disc) under a geometric condition.

\begin{theorem}
\label{thm:21} Let $(X,d)$ be a metric space, the number $\rho $ be defined
as in $($\ref{the number rho}$)$ and $T:X\rightarrow X$ be a type $1$ $%
\varphi _{x_{0}}$-contraction with the point $x_{0}\in X$ and the given
function\ $\varphi :X\rightarrow \left[ 0,\infty \right) $. If $x_{0}\in
Z_{\varphi }$ and%
\begin{equation}
\varphi \left( x\right) \leq d\left( Tx,x\right)   \label{eq:22}
\end{equation}%
for all $x\in C_{x_{0},\rho }$, then the circle $C_{x_{0},\rho }$ is a $%
\varphi $-fixed circle of $T$.
\end{theorem}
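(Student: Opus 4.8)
The plan is to show directly that every point of $C_{x_{0},\rho }$ lies in $Fix(T)\cap Z_{\varphi }$, which is precisely the definition of a $\varphi $-fixed circle. I would fix an arbitrary $x\in C_{x_{0},\rho }$, so that $d(x,x_{0})=\rho $, and first argue that $Tx=x$ by contradiction. Assuming $d(Tx,x)>0$ activates the type $1$ $\varphi _{x_{0}}$-contraction hypothesis (\ref{eq:21}). The right-hand side of (\ref{eq:21}) simplifies considerably: since $x_{0}\in Z_{\varphi }$ gives $\varphi (x_{0})=0$ and $d(x,x_{0})=\rho $, it reduces to $k\max \{\rho ,\varphi (x)\}$.

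Next I would exploit the infimum definition of $\rho $ in (\ref{the number rho}): since $x\neq Tx$, we have $d(Tx,x)\geq \rho $. The left-hand side of (\ref{eq:21}) dominates both $d(x,Tx)$ and $\varphi (x)$ separately, so I obtain the two inequalities $d(x,Tx)\leq k\max \{\rho ,\varphi (x)\}$ and $\varphi (x)\leq k\max \{\rho ,\varphi (x)\}$. A short case analysis on whether $\rho $ or $\varphi (x)$ realizes the maximum then closes the argument: if $\varphi (x)<\rho $, the first inequality gives $d(x,Tx)\leq k\rho <\rho $, contradicting $d(Tx,x)\geq \rho $; if instead $\varphi (x)\geq \rho $, the second gives $\varphi (x)\leq k\varphi (x)$, which forces $\varphi (x)=0$ and hence $\rho =0$, whereupon $d(x,Tx)\leq 0$ again contradicts $d(Tx,x)>0$. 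Either way the assumption $d(Tx,x)>0$ is untenable, so $Tx=x$ and $x\in Fix(T)$.

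Finally, having established $Tx=x$ for every $x\in C_{x_{0},\rho }$, I would feed $d(Tx,x)=0$ into the geometric hypothesis (\ref{eq:22}) to get $\varphi (x)\leq d(Tx,x)=0$; since $\varphi $ is nonnegative, this forces $\varphi (x)=0$, that is, $x\in Z_{\varphi }$. Combining the two conclusions yields $x\in Fix(T)\cap Z_{\varphi }$ for all $x\in C_{x_{0},\rho }$, so $C_{x_{0},\rho }\subseteq Fix(T)\cap Z_{\varphi }$, which is the desired $\varphi $-fixed circle property. I expect the only genuinely delicate point to be the case analysis in the second step, especially the degenerate subcase where the maximum is attained by $\varphi (x)$: here the strict contraction constant alone is not enough, and one must fall back on the infimum characterization of $\rho $ to rule out the configuration $\varphi (x)=\rho =0$ with $d(Tx,x)>0$.
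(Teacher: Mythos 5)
Your proof is correct and follows essentially the same route as the paper's: specialize the contraction inequality at a point of the circle, use $\varphi(x_0)=0$ to reduce the right-hand side to $k\max\{\rho,\varphi(x)\}$, and derive a contradiction from $d(Tx,x)\geq\rho$ together with $k\in(0,1)$. The only difference is organizational --- you fold the paper's separate verification that $x_0\in Fix(T)$ and its case split $\rho=0$ versus $\rho>0$ into a single uniform argument, and you are somewhat more explicit than the paper about why the degenerate subcase $\varphi(x)=\rho=0$ cannot coexist with $d(Tx,x)>0$.
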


\begin{proof}
At first, we show that $x_{0}\in Fix\left( T\right) $. Conversely, assume
that $x_{0}\neq Tx_{0}$. Then we have $d\left( Tx_{0},x_{0}\right) >0$ and
using the inequality (\ref{eq:21}) together with the hypothesis $x_{0}\in
Z_{\varphi }$, we find%
\begin{equation*}
\max \left\{ d\left( x_{0},Tx_{0}\right) ,\varphi \left( Tx_{0}\right)
,\varphi \left( x_{0}\right) \right\} \leq k\max \left\{ d\left(
x_{0},x_{0}\right) ,\varphi \left( x_{0}\right) ,\varphi \left( x_{0}\right)
\right\} =0,
\end{equation*}%
and hence
\begin{equation*}
\max \left\{ d\left( x_{0},Tx_{0}\right) ,\varphi \left( Tx_{0}\right)
\right\} =0.
\end{equation*}%
This implies $d\left( x_{0},Tx_{0}\right) =0$, which is a contradiction with
our assumption. Therefore, it should be $Tx_{0}=x_{0}$, that is, $x_{0}\in
Fix\left( T\right) \cap Z_{\varphi }$.

Now we have two cases.

\textbf{Case 1.} If $\rho =0$, then clearly $C_{x_{0},\rho }=\left\{
x_{0}\right\} \subset Fix\left( T\right) \cap Z_{\varphi }$ and hence, the
circle $C_{x_{0},\rho }$ is a $\varphi $-fixed circle of $T$.

\textbf{Case 2.} Let $\rho >0$. For any $x\in C_{x_{0},\rho }$ with $Tx\neq
x $, we have
\begin{equation*}
\max \left\{ d\left( x,Tx\right) ,\varphi \left( Tx\right) ,\varphi \left(
x\right) \right\} \leq k\max \left\{ d\left( x,x_{0}\right) ,\varphi \left(
x\right) \right\} .
\end{equation*}%
If $\max \left\{ d\left( x,x_{0}\right) ,\varphi \left( x\right) \right\}
=d\left( x,x_{0}\right) =\rho $, then by the definition of the number $\rho $%
, we get
\begin{equation*}
\max \left\{ d\left( x,Tx\right) ,\varphi \left( Tx\right) ,\varphi \left(
x\right) \right\} \leq k\rho \leq kd\left( x,Tx\right)
\end{equation*}%
and so $d\left( x,Tx\right) \leq kd\left( x,Tx\right) $, a contradiction by
the hypothesis $k\in \left( 0,1\right) $.

If $\max \left\{ d\left( x,x_{0}\right) ,\varphi \left( x\right) \right\}
=\varphi \left( x\right) $, we obtain%
\begin{equation*}
\max \left\{ d\left( x,Tx\right) ,\varphi \left( Tx\right) ,\varphi \left(
x\right) \right\} \leq k\varphi \left( x\right)
\end{equation*}%
and hence $\varphi \left( x\right) \leq k\varphi \left( x\right) $, a
contradiction.

Then, it should be $Tx=x$, that is, $x\in Fix\left( T\right) $. By (\ref%
{eq:22}), we have $\varphi \left( x\right) =0$ for all $x\in C_{x_{0},\rho }$%
. This implies $x\in Fix\left( T\right) \cap Z_{\varphi }$ for all $x\in
C_{x_{0},\rho }$. Consequently, we find $C_{x_{0},\rho }\subset Fix\left(
T\right) \cap Z_{\varphi }$ and hence, the circle $C_{x_{0},\rho }$ is a $%
\varphi $-fixed circle of $T$.
\end{proof}

\begin{theorem}
\label{thm:22} Let $(X,d)$ be a metric space, the number $\rho $ be defined
as in $($\ref{the number rho}$)$, $T:X\rightarrow X$ be a type $1$ $\varphi
_{x_{0}}$-contraction with the point $x_{0}\in X$ and the given function\ $%
\varphi :X\rightarrow \left[ 0,\infty \right) $. If $x_{0}\in Z_{\varphi }$
and the inequality
\begin{equation*}
\varphi \left( x\right) \leq d\left( Tx,x\right)
\end{equation*}%
is satisfied for all $x\in D_{x_{0},\rho }$, then the disc $D_{x_{0},\rho }$
is a $\varphi $-fixed disc of $T$.
\end{theorem}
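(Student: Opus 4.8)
The plan is to mirror the proof of Theorem \ref{thm:21}, adapting it from a circle to a disc; the single genuine change lies in how the condition of lying in the disc, namely $d(x,x_{0})\le \rho$ rather than $d(x,x_{0})=\rho$, is exploited.

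First I would establish that the center lies in $Fix(T)\cap Z_{\varphi}$. Exactly as in Theorem \ref{thm:21}, suppose toward a contradiction that $Tx_{0}\neq x_{0}$, so that $d\left(Tx_{0},x_{0}\right)>0$; applying the defining implication (\ref{eq:21}) at $x=x_{0}$ and using $\varphi\left(x_{0}\right)=0$ forces the right-hand maximum to vanish, whence $d\left(x_{0},Tx_{0}\right)=0$, a contradiction. Therefore $x_{0}\in Fix(T)$, and since $x_{0}\in Z_{\varphi}$ by hypothesis, $x_{0}\in Fix(T)\cap Z_{\varphi}$.

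Next I would split into the cases $\rho=0$ and $\rho>0$. If $\rho=0$, then $D_{x_{0},\rho}=\left\{x_{0}\right\}$, which has just been shown to lie in $Fix(T)\cap Z_{\varphi}$. If $\rho>0$, I take an arbitrary $x\in D_{x_{0},\rho}$ and argue by contradiction that $Tx=x$. Assuming $Tx\neq x$ gives $d\left(Tx,x\right)>0$, so (\ref{eq:21}) together with $\varphi\left(x_{0}\right)=0$ yields $\max\left\{d\left(x,Tx\right),\varphi\left(Tx\right),\varphi\left(x\right)\right\}\le k\max\left\{d\left(x,x_{0}\right),\varphi\left(x\right)\right\}$. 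Here is the one step that differs from the circle proof: if the right-hand maximum equals $d\left(x,x_{0}\right)$, then lying in the disc gives $d\left(x,x_{0}\right)\le\rho$, while the definition of $\rho$ as an infimum over non-fixed points gives $\rho\le d\left(x,Tx\right)$, and chaining these inequalities produces $d\left(x,Tx\right)\le k\,d\left(x,x_{0}\right)\le k\rho\le k\,d\left(x,Tx\right)$, contradicting $k\in\left(0,1\right)$. If instead the right-hand maximum equals $\varphi\left(x\right)$, then $\varphi\left(x\right)\le k\varphi\left(x\right)$, which is impossible unless $\varphi\left(x\right)=0$; but in that case the right-hand side is $0$ and forces $d\left(x,Tx\right)=0$, again a contradiction. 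Hence $Tx=x$, i.e. $x\in Fix(T)$.

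Finally, for each such $x$ I would invoke the hypothesis $\varphi\left(x\right)\le d\left(Tx,x\right)$; since $Tx=x$ this reads $\varphi\left(x\right)\le 0$, so $\varphi\left(x\right)=0$ and $x\in Z_{\varphi}$. Thus every $x\in D_{x_{0},\rho}$ belongs to $Fix(T)\cap Z_{\varphi}$, that is, $D_{x_{0},\rho}\subseteq Fix(T)\cap Z_{\varphi}$, which is precisely the assertion that $D_{x_{0},\rho}$ is a $\varphi$-fixed disc of $T$. The only point demanding care --- and the main, though mild, obstacle --- is recognizing that replacing the circle by the disc swaps the equality $d\left(x,x_{0}\right)=\rho$ for the inequality $d\left(x,x_{0}\right)\le\rho$, and that this still suffices because it combines with $\rho\le d\left(x,Tx\right)$ to close the contradiction.
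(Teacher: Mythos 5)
Your proof is correct and follows essentially the same route the paper intends: the paper's own proof of this theorem simply defers to the proof of Theorem \ref{thm:21}, and you have filled in the details, correctly isolating the one change (using $d(x,x_{0})\leq\rho$ together with $\rho\leq d(x,Tx)$ in place of $d(x,x_{0})=\rho$). Your handling of the subcase $\varphi(x)=0$ is in fact slightly more careful than the paper's wording in Theorem \ref{thm:21}.
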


\begin{proof}
The proof follows from the proof of Theorem \ref{thm:21}.
\end{proof}

We give an illustrative example of Theorem \ref{thm:21} and Theorem \ref%
{thm:22}.

\begin{example}
\label{exm1} Let $\left(
\mathbb{R}
,d\right) $ be the usual metric space and consider the self-mapping $%
T:X\rightarrow X$ defined by%
\begin{equation*}
Tx=\left\{
\begin{array}{ccc}
\frac{x}{2} & , & x>2 \\
x & ; & x\leq 2%
\end{array}%
\right.
\end{equation*}%
and the function $\varphi :$ $%
\mathbb{R}
\rightarrow \left[ 0,\infty \right) $ defined by
\begin{equation*}
\varphi \left( x\right) =\left\{
\begin{array}{ccc}
\frac{x}{4} & ; & x>0 \\
0 & ; & x\leq 0%
\end{array}%
\right. .
\end{equation*}%
We show that $T$ is a type $1$ $\varphi _{x_{0}}$-contraction with the point
$x_{0}=-1$ and $k=\frac{1}{2}$. Indeed, we have
\begin{equation*}
\max \left\{ \left\vert x-\frac{x}{2}\right\vert ,\frac{x}{8},\frac{x}{4}%
\right\} =\frac{x}{2}\leq \frac{1}{2}\max \left\{ \left\vert x+1\right\vert ,%
\frac{x}{4},0\right\} =\frac{x+1}{2}=\frac{x}{2}+\frac{1}{2}
\end{equation*}%
for all $x>2$. We find%
\begin{eqnarray*}
\rho  &=&\inf \left\{ d\left( Tx,x\right) :x\in X,x\neq Tx\right\}  \\
&=&\inf \left\{ \left\vert x-\frac{x}{2}\right\vert =\frac{x}{2}:x>2\right\}
\\
&=&1
\end{eqnarray*}%
and the conditions of Theorem \ref{thm:21} are satisfied by $T$. Observe
that $Fix\left( T\right) \cap Z_{\varphi }=\left( -\infty ,0\right] $ and we
get
\begin{equation*}
C_{-1,1}=\left\{ -2,0\right\} \subset Fix\left( T\right) \cap Z_{\varphi }.
\end{equation*}%
Hence, the circle $C_{-1,1}$ is a $\varphi $-fixed circle of $T$.

Clearly, $T$ also satisfies the conditions of Theorem \ref{thm:22} and the
disc $D_{-1,1}$ is a $\varphi $-fixed disc of $T$.

Notice that the condition $\varphi \left( x\right) \leq d\left( Tx,x\right) $
is curicial for both Theorem \ref{thm:21} and Theorem \ref{thm:22}. For any $%
x_{0}\in \left( -1,0\right] $, this condition is not satisfied for the
points $x=1+x_{0}\in C_{x_{0},1}$ and $x\in D_{x_{0},1}$ with $x>0$. In
fact, it is easy to check that $T$ is also a type $1$ $\varphi _{x_{0}}$%
-contraction with each of the points $x_{0}\in \left( -1,0\right] $.
\end{example}

Now, we define a new contraction type using the auxiliary number $M(x,y)$
defined in (\ref{M(x,y)}).

\begin{definition}
\label{def:22} Let $(X,d)$ be a metric space, $T$ be a self-mapping of $X$
and $\varphi :X\rightarrow \left[ 0,\infty \right) $ be a given function. If
there exists a point $x_{0}\in X$ such that
\begin{equation}
d\left( Tx,x\right) >0\Rightarrow \max \left\{ d\left( x,Tx\right) ,\varphi
\left( Tx\right) ,\varphi \left( x\right) \right\} \leq k\max \left\{
M\left( x,x_{0}\right) ,\varphi \left( x\right) ,\varphi \left( x_{0}\right)
\right\} ,  \label{eq:23}
\end{equation}%
for all $x\in X$ and some $k\in \left( 0,\frac{1}{2}\right) $, then $T$ is
called a generalized type $1$ $\varphi _{x_{0}}$-contraction.
\end{definition}

We prove the following $\varphi $-fixed circle theorem using the number $\mu
$ defined in (\ref{the number mu}). We note that
\begin{eqnarray*}
M\left( x_{0},x_{0}\right) &=&\max \left\{
d(x_{0},x_{0}),d(x_{0},Tx_{0}),d(x_{0},Tx_{0}),\left[ \frac{%
d(x_{0},Tx_{0})+d(x_{0},Tx_{0})}{1+d(x_{0},Tx_{0})+d(x_{0},Tx_{0})}\right]
d(x_{0},x_{0})\right\} \\
&=&d(x_{0},Tx_{0}).
\end{eqnarray*}

\begin{theorem}
\label{thm:23} Let $(X,d)$ be a metric space, the number $\mu $ be defined
as in $($\ref{the number mu}$)$ and $T:X\rightarrow X$ be a generalized type
$1$ $\varphi _{x_{0}}$-contraction with the point $x_{0}\in X$ and the given
function\ $\varphi :X\rightarrow \left[ 0,\infty \right) $. If $x_{0}\in
Z_{\varphi }$ and the inequalities%
\begin{equation}
d\left( Tx,x_{0}\right) \leq \mu \text{,}  \label{eq:24}
\end{equation}%
\begin{equation*}
\varphi \left( x\right) \leq d\left( Tx,x\right)
\end{equation*}%
hold for all $x\in C_{x_{0},\mu }$, then the circle $C_{x_{0},\mu }$ is a $%
\varphi $-fixed circle of $T$.
\end{theorem}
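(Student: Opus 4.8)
The plan is to mirror the structure of Theorem~\ref{thm:21}: first identify $x_0$ as a fixed point, then rule out non-fixed points on the circle, the genuinely new ingredient being the evaluation of the auxiliary number $M(x,x_0)$. To fix $x_0$, I would argue by contradiction: if $x_0\neq Tx_0$ then $d(Tx_0,x_0)>0$, so (\ref{eq:23}) applies at $x=x_0$; invoking the identity $M(x_0,x_0)=d(x_0,Tx_0)$ recorded before the theorem together with $\varphi(x_0)=0$, the right-hand side collapses to $k\,d(x_0,Tx_0)$ while the left-hand side is at least $d(x_0,Tx_0)$, forcing $d(x_0,Tx_0)\le k\,d(x_0,Tx_0)$ with $k<1$, a contradiction. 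Hence $Tx_0=x_0$ and, since $x_0\in Z_\varphi$, we get $x_0\in Fix(T)\cap Z_\varphi$. The case $\mu=0$ is then immediate because $C_{x_0,\mu}=\{x_0\}$.

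For $\mu>0$ I would take an arbitrary $x\in C_{x_0,\mu}$ and suppose, for contradiction, that $Tx\neq x$. The core computation is $M(x,x_0)$. Because $x_0$ is now fixed, $d(x_0,Tx_0)=0$ and $d(x,Tx_0)=d(x,x_0)=\mu$, so the four entries defining $M(x,x_0)$ reduce to $\mu$, $d(x,Tx)$, $0$, and the fractional term $\frac{\mu\,(\mu+d(x_0,Tx))}{1+d(x,Tx)}$. The pivotal claim is that this fractional term never exceeds $\mu$, which would give the clean identity $M(x,x_0)=\max\{\mu,d(x,Tx)\}$.

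Establishing that bound is the step I expect to be the main obstacle, and it is exactly where the square root in the definition (\ref{the number mu}) of $\mu$ and the hypothesis (\ref{eq:24}) are designed to cooperate. Using $d(x_0,Tx)=d(Tx,x_0)\le\mu$ from (\ref{eq:24}) bounds the numerator by $2\mu$, while $x\neq Tx$ together with the infimum defining $\mu$ gives $\sqrt{d(x,Tx)}\ge\mu$, i.e.\ $d(x,Tx)\ge\mu^2$, so the fractional term is at most $\frac{2\mu^2}{1+\mu^2}$; the inequality $\frac{2\mu^2}{1+\mu^2}\le\mu$ is then equivalent to $(1-\mu)^2\ge0$ and hence always holds. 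This confirms $M(x,x_0)=\max\{\mu,d(x,Tx)\}$.

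Finally I would substitute back into (\ref{eq:23}). Since $\varphi(x_0)=0$ and $M(x,x_0)\ge\mu>0$, the right-hand side simplifies to $k\,M(x,x_0)$ (if instead $\varphi(x)$ dominated, then $\varphi(x)\le k\varphi(x)$ would force $\varphi(x)=0$, contradicting $\varphi(x)>M(x,x_0)\ge\mu$). Thus $d(x,Tx)\le k\max\{\mu,d(x,Tx)\}$; the alternative $M(x,x_0)=d(x,Tx)$ gives $d(x,Tx)\le k\,d(x,Tx)$, impossible for $k<1$, so $M(x,x_0)=\mu$ and $d(x,Tx)\le k\mu$. Confronting this with the lower bound $d(x,Tx)\ge\mu^2$ supplied by the infimum is intended to yield a contradiction, closing the case in the spirit of the step $\rho\le d(x,Tx)$ used in Theorem~\ref{thm:21}; reconciling the quadratic bound $\mu^2$ against the linear bound $k\mu$ is the delicate point I would examine most closely. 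Once $Tx=x$ is forced, the hypothesis $\varphi(x)\le d(Tx,x)=0$ gives $x\in Z_\varphi$, and since $x\in C_{x_0,\mu}$ was arbitrary we conclude $C_{x_0,\mu}\subseteq Fix(T)\cap Z_\varphi$, as required.
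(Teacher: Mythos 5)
Your strategy is the same as the paper's: show $Tx_0=x_0$ via $M(x_0,x_0)=d(x_0,Tx_0)$ and $\varphi(x_0)=0$, dispose of $\mu=0$, and for $\mu>0$ bound $M(x,x_0)$ using $d(x,x_0)=\mu$, the hypothesis (\ref{eq:24}) and $d(x,Tx)\ge\mu^2$. Your reduction $M(x,x_0)=\max\{\mu,d(x,Tx)\}$, obtained from $\tfrac{2\mu^2}{1+d(x,Tx)}\le\tfrac{2\mu^2}{1+\mu^2}\le\mu\iff(1-\mu)^2\ge0$, is correct and in fact tidier than the paper's case analysis, and your treatment of the cases where $\varphi(x)$ dominates and where $M(x,x_0)=d(x,Tx)$ matches the paper.

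The ``delicate point'' you flag at the end is, however, a genuine gap, and it cannot be closed. When $M(x,x_0)=\mu$ you get $d(x,Tx)\le k\mu$, while the infimum in (\ref{the number mu}) only supplies $d(x,Tx)\ge\mu^2$; since $\mu^2\le k\mu$ holds whenever $\mu\le k$, there is no contradiction. The square root works against you precisely when $\mu<1$. You should know that the paper's own proof has the same hole: it splits only on $M(x,x_0)=d(x,Tx)$ and $M(x,x_0)=\tfrac{2\mu^2}{1+d(x,Tx)}$ and never treats $M(x,x_0)=d(x,x_0)=\mu$, which is exactly the value forced when $d(x,Tx)<\mu$. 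The gap is not cosmetic. Take $X=\{0,\ 0.21,\ 0.3\}$ with the usual metric, $T0=0$, $T(0.21)=0.21$, $T(0.3)=0.21$, $\varphi\equiv0$, $x_0=0$ and $k=\tfrac{2}{5}$. Then $\mu=\sqrt{0.09}=0.3$, $M(0.3,0)=\max\{0.3,\ 0.09,\ 0,\ \tfrac{0.153}{1.09}\}=0.3$, so (\ref{eq:23}) reads $0.09\le 0.12$ and holds; $x_0\in Z_\varphi$, $d(T(0.3),0)=0.21\le\mu$ and $\varphi\le d(T\cdot,\cdot)$ on $C_{0,0.3}=\{0.3\}$, so every hypothesis of the theorem is met, yet $0.3$ is not a fixed point. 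So the step you singled out is where both your argument and the theorem as stated actually break; an extra hypothesis such as $\mu>k$ (which restores $d(x,Tx)\ge\mu^2>k\mu$) would let you finish along the lines you describe.
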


\begin{proof}
Suppose that $x_{0}\neq Tx_{0}$. Then we have $d\left( Tx_{0},x_{0}\right)
>0 $ and using the inequality (\ref{eq:23}) and the hypothesis $x_{0}\in
Z_{\varphi }$, we find%
\begin{eqnarray*}
\max \left\{ d\left( x_{0},Tx_{0}\right) ,\varphi \left( Tx_{0}\right)
,\varphi \left( x_{0}\right) \right\} &\leq &k\max \left\{ M\left(
x_{0},x_{0}\right) ,\varphi \left( x_{0}\right) ,\varphi \left( x_{0}\right)
\right\} \\
&\leq &kM\left( x_{0},x_{0}\right) \\
&=&kd(x_{0},Tx_{0})
\end{eqnarray*}%
and hence
\begin{equation*}
\max \left\{ d\left( x_{0},Tx_{0}\right) ,\varphi \left( Tx_{0}\right)
\right\} \leq kd(x_{0},Tx_{0}).
\end{equation*}%
This implies $d\left( x_{0},Tx_{0}\right) \leq kd(x_{0},Tx_{0})$, which is a
contradiction since $k\in \left( 0,\frac{1}{2}\right) $. It should be $%
Tx_{0}=x_{0}$, that is, we get $x_{0}\in Fix\left( T\right) $. Therefore, we
have $x_{0}\in Fix\left( T\right) \cap Z_{\varphi }$.

If $\mu =0$, then clearly $C_{x_{0},\mu }=\left\{ x_{0}\right\} \subset
Fix\left( T\right) \cap Z_{\varphi }$ and hence, the circle $C_{x_{0},\mu }$
is a $\varphi $-fixed circle of $T$.

Now, let $\mu >0$. For any $x\in C_{x_{0},\mu }$ with $Tx\neq x$, we have
\begin{equation*}
\max \left\{ d\left( x,Tx\right) ,\varphi \left( Tx\right) ,\varphi \left(
x\right) \right\} \leq k\max \left\{ M\left( x,x_{0}\right) ,\varphi \left(
x\right) \right\} .
\end{equation*}%
If $\max \left\{ M\left( x,x_{0}\right) ,\varphi \left( x\right) \right\}
=\varphi \left( x\right) $, we obtain%
\begin{equation*}
\max \left\{ d\left( x,Tx\right) ,\varphi \left( Tx\right) ,\varphi \left(
x\right) \right\} \leq k\varphi \left( x\right)
\end{equation*}%
and hence $\varphi \left( x\right) \leq k\varphi \left( x\right) $, a
contradiction by the hypothesis $k\in \left( 0,\frac{1}{2}\right) $.

Let $\max \left\{ M\left( x,x_{0}\right) ,\varphi \left( x\right) \right\}
=M\left( x,x_{0}\right) $. We have
\begin{eqnarray*}
M(x,x_{0}) &=&\max \left\{ d(x,x_{0}),d(x,Tx),d(x_{0},Tx_{0}),\left[ \frac{%
d(x,Tx_{0})+d(x_{0},Tx)}{1+d(x,Tx)+d(x_{0},Tx_{0})}\right] d(x,x_{0})\right\}
\\
&\leq &\max \left\{ \mu ,d(x,Tx),0,\left[ \frac{\mu +\mu }{1+d(x,Tx)}\right]
\mu \right\} \\
&=&\max \left\{ \mu ,d(x,Tx),0,\frac{2\mu ^{2}}{1+d(x,Tx)}\right\} .
\end{eqnarray*}%
If $M(x,x_{0})=d(x,Tx)$ then we get
\begin{equation*}
\max \left\{ d\left( x,Tx\right) ,\varphi \left( Tx\right) ,\varphi \left(
x\right) \right\} \leq kd\left( x,Tx\right)
\end{equation*}%
and so $d\left( x,Tx\right) \leq kd\left( x,Tx\right) $, a contradiction by
the hypothesis $k\in \left( 0,\frac{1}{2}\right) $. If $M(x,x_{0})=\frac{%
2\mu ^{2}}{1+d(x,Tx)}$ then by the definition of the number $\mu $ we have%
\begin{eqnarray*}
\max \left\{ d\left( x,Tx\right) ,\varphi \left( Tx\right) ,\varphi \left(
x\right) \right\} &\leq &k\frac{2\mu ^{2}}{1+d(x,Tx)} \\
&\leq &\frac{2k\left( \sqrt{d(x,Tx)}\right) ^{2}}{1+d(x,Tx)} \\
&<&2kd(x,Tx)
\end{eqnarray*}%
and hence $d\left( x,Tx\right) <2k(x,Tx)$, a contradiction by the hypothesis
$k\in \left( 0,\frac{1}{2}\right) $. Then, it should be $Tx=x$, that is, $%
x\in Fix\left( T\right) $ in all of the above cases. By the hypothesis $%
\varphi \left( x\right) \leq d\left( Tx,x\right) $, we have $\varphi \left(
x\right) =0$ for all $x\in C_{x_{0},\mu }$. This implies $x\in Fix\left(
T\right) \cap Z_{\varphi }$ for all $x\in C_{x_{0},\mu }$. Consequently, we
find $C_{x_{0},\mu }\subset Fix\left( T\right) \cap Z_{\varphi }$ and hence,
the circle $C_{x_{0},\mu }$ is a $\varphi $-fixed circle of $T$.
\end{proof}

\begin{theorem}
\label{thm:24} Let $(X,d)$ be a metric space, the number $\mu $ be defined
as in $($\ref{the number mu}$)$ and $T:X\rightarrow X$ be a type $1$
generalized $\varphi _{x_{0}}$-contraction with the point $x_{0}\in X$ and
the given function\ $\varphi :X\rightarrow \left[ 0,\infty \right) $. If $%
x_{0}\in Z_{\varphi }$ and the inequalities%
\begin{equation*}
d\left( Tx,x_{0}\right) \leq \mu \text{,}
\end{equation*}%
\begin{equation*}
\varphi \left( x\right) \leq d\left( Tx,x\right)
\end{equation*}%
hold for all $x\in D_{x_{0},\mu }$, then the disc $D_{x_{0},\mu }$ is a $%
\varphi $-fixed disc of $T$.
\end{theorem}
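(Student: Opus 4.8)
The plan is to follow the argument of Theorem \ref{thm:23} almost verbatim, since the disc $D_{x_{0},\mu }$ is obtained from the circle $C_{x_{0},\mu }$ by relaxing the boundary equality $d(x,x_{0})=\mu $ to the inequality $d(x,x_{0})\leq \mu $, and every estimate used in the circle case is driven by inequalities rather than equalities. Note also that ``type $1$ generalized $\varphi _{x_{0}}$-contraction'' here means the same object as in Definition \ref{def:22}.

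First I would show that the center lies in $Fix(T)\cap Z_{\varphi }$. Assuming $x_{0}\neq Tx_{0}$, we have $d(Tx_{0},x_{0})>0$, so the defining implication (\ref{eq:23}) applies with $x=x_{0}$; using the computation $M(x_{0},x_{0})=d(x_{0},Tx_{0})$ recorded just before Theorem \ref{thm:23} together with $\varphi (x_{0})=0$, this forces $d(x_{0},Tx_{0})\leq k\,d(x_{0},Tx_{0})$ with $k\in (0,\tfrac{1}{2})$, a contradiction. Hence $x_{0}\in Fix(T)\cap Z_{\varphi }$. If $\mu =0$, then $D_{x_{0},\mu }=\{x_{0}\}$ and the conclusion is immediate.

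For $\mu >0$ I would fix an arbitrary $x\in D_{x_{0},\mu }$ with $Tx\neq x$ and argue by contradiction. Since $\varphi (x_{0})=0$, (\ref{eq:23}) reduces to $\max \{d(x,Tx),\varphi (Tx),\varphi (x)\}\leq k\max \{M(x,x_{0}),\varphi (x)\}$. If the right-hand maximum equals $\varphi (x)$, then $\varphi (x)\leq k\varphi (x)$, impossible for $k\in (0,\tfrac{1}{2})$. In the remaining case the key step is to bound $M(x,x_{0})$: using $Tx_{0}=x_{0}$ (so $d(x_{0},Tx_{0})=0$ and $d(x,Tx_{0})=d(x,x_{0})$), the disc condition $d(x,x_{0})\leq \mu $, and the hypothesis $d(Tx,x_{0})\leq \mu $, the same computation as in Theorem \ref{thm:23} yields $M(x,x_{0})\leq \max \{\mu ,d(x,Tx),\tfrac{2\mu ^{2}}{1+d(x,Tx)}\}$. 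Splitting into the subcases $M(x,x_{0})=d(x,Tx)$ and $M(x,x_{0})=\tfrac{2\mu ^{2}}{1+d(x,Tx)}$, and using that $Tx\neq x$ forces $\sqrt{d(x,Tx)}\geq \mu $, i.e.\ $\mu ^{2}\leq d(x,Tx)$, each subcase produces either $d(x,Tx)\leq k\,d(x,Tx)$ or $d(x,Tx)<2k\,d(x,Tx)$, both contradicting $k<\tfrac{1}{2}$. Therefore $Tx=x$ for every $x\in D_{x_{0},\mu }$, and then the hypothesis $\varphi (x)\leq d(Tx,x)=0$ gives $\varphi (x)=0$, so $D_{x_{0},\mu }\subseteq Fix(T)\cap Z_{\varphi }$.

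The only point that genuinely requires care — the place where one must confirm that the circle argument really transfers — is the bound on $M(x,x_{0})$: I would check that it depended on $d(x,x_{0})$ solely through the inequality $d(x,x_{0})\leq \mu $ (and on $d(Tx,x_{0})\leq \mu $), both of which are assumed on the entire disc and not merely on its bounding circle. Once this is verified there is no new obstacle, and the conclusion that $D_{x_{0},\mu }$ is a $\varphi $-fixed disc follows exactly as in the proof of Theorem \ref{thm:23}.
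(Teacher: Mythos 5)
Your proof is correct and follows essentially the same route as the paper: the paper's own proof of Theorem \ref{thm:24} simply states that it is similar to that of Theorem \ref{thm:23}, and your adaptation --- replacing $d(x,x_{0})=\mu$ by $d(x,x_{0})\leq \mu$ and checking that the bound on $M(x,x_{0})$ depends on $x$ only through $d(x,x_{0})\leq \mu$ and $d(Tx,x_{0})\leq \mu$ --- is exactly the intended argument. The only caveat, inherited from the paper's proof of Theorem \ref{thm:23} rather than introduced by you, is that the case split on $M(x,x_{0})\leq \max\left\{\mu ,d(x,Tx),\tfrac{2\mu ^{2}}{1+d(x,Tx)}\right\}$ omits the subcase where the maximum equals $\mu$.
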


\begin{proof}
The proof is similar to the proof of Theorem \ref{thm:23}.
\end{proof}

Now, we give two illustrative examples.

\begin{example}
\label{exm2} Let $\left(
\mathbb{R}
,d\right) $ be the usual metric space and consider the self-mapping $%
T:X\rightarrow X$ defined by%
\begin{equation*}
Tx=\left\{
\begin{array}{ccc}
x & ; & x\geq -2 \\
x-1 & ; & x<-2%
\end{array}%
\right.
\end{equation*}%
and the function $\varphi :$ $%
\mathbb{R}
\rightarrow \left[ 0,\infty \right) $ defined by
\begin{equation*}
\varphi \left( x\right) =\left\vert x\right\vert -x=\left\{
\begin{array}{ccc}
0 & ; & x\geq -1 \\
-2x & ; & x<-1%
\end{array}%
\right. .
\end{equation*}%
We have%
\begin{eqnarray*}
\mu &=&\inf \left\{ \sqrt{d\left( Tx,x\right) }:x\in X,x\neq Tx\right\} \\
&=&\inf \left\{ \sqrt{\left\vert x-1-x\right\vert }:x<-2\right\} \\
&=&1
\end{eqnarray*}%
We show that $T$ is not a type $1$ $\varphi _{x_{0}}$-contraction with the
point $x_{0}=0$ and any $k\in \left( 0,1\right) $. Indeed, we have
\begin{equation*}
\max \left\{ d\left( x,Tx\right) ,\varphi \left( Tx\right) ,\varphi \left(
x\right) \right\} =\max \left\{ \left\vert x-1-x\right\vert ,-2x,-2x\right\}
=-2x
\end{equation*}%
and%
\begin{equation*}
\max \left\{ d\left( x,0\right) ,\varphi \left( Tx\right) ,\varphi \left(
0\right) \right\} =\max \left\{ \left\vert x-0\right\vert ,-2x,0\right\}
=-2x,
\end{equation*}%
for all $x<-2$.

On the other hand, we have%
\begin{equation*}
M(x,0)=\max \left\{ \left\vert x\right\vert ,1,0,\left[ \frac{\left\vert
x\right\vert +\left\vert x-1\right\vert }{1+1}\right] \left\vert
x\right\vert \right\} =\frac{\left\vert x\right\vert +\left\vert
x-1\right\vert }{2}\left\vert x\right\vert ,
\end{equation*}%
for all $x<-2$. If we choose $k=\frac{1}{4}$, then $($\ref{eq:23}$)$ is
satisfied and so $T$ is a generalized type $1$ $\varphi _{x_{0}}$%
-contraction with the point $x_{0}=0$.

Observe that
\begin{equation*}
Fix\left( T\right) \cap Z_{\varphi }=\left[ -2,\infty \right) \cap \left[
-1,\infty \right) =\left[ -1,\infty \right)
\end{equation*}%
and we get
\begin{equation*}
C_{0,1}=\left\{ -1,1\right\} \subset Fix\left( T\right) \cap Z_{\varphi }.
\end{equation*}%
Hence, the circle $C_{0,1}$ is a $\varphi $-fixed circle of $T$.

Clearly, $T$ also satisfies the conditions of Theorem \ref{thm:24} and the
disc $D_{0,1}$ is a $\varphi $-fixed disc of $T$.
\end{example}

\begin{example}
\label{exm3} Let $X=%
\mathbb{R}
$ be the usual metric space and consider the self-mapping $T:X\rightarrow X$
defined by%
\begin{equation*}
Tx=\left\{
\begin{array}{ccc}
2x & ; & x<-1 \\
x & ; & x\geq -1%
\end{array}%
\right.
\end{equation*}%
and the function $\varphi :$ $%
\mathbb{R}
\rightarrow \left[ 0,\infty \right) $ defined by
\begin{equation*}
\varphi \left( x\right) =\left\{
\begin{array}{ccc}
0 & ; & x\geq -1 \\
\left\vert x\right\vert & ; & x<-1%
\end{array}%
\right. .
\end{equation*}%
We have%
\begin{eqnarray*}
\rho &=&\inf \left\{ d\left( Tx,x\right) :x\in X,x\neq Tx\right\} \\
&=&\inf \left\{ \left\vert 2x-x\right\vert :x<-1\right\} =\inf \left\{
\left\vert x\right\vert :x<-1\right\} =1
\end{eqnarray*}%
and%
\begin{eqnarray*}
\mu &=&\inf \left\{ \sqrt{d\left( Tx,x\right) }:x\in X,x\neq Tx\right\} \\
&=&\inf \left\{ \sqrt{\left\vert 2x-x\right\vert }:x<-1\right\} =\inf
\left\{ \sqrt{\left\vert x\right\vert }:x<-1\right\} \\
&=&1.
\end{eqnarray*}%
We show that $T$ is not a type $1$ $\varphi _{x_{0}}$-contraction $($resp.
generalized type $1$ $\varphi _{x_{0}}$-contraction$)$ with the point $%
x_{0}=0$. Indeed, for any $x<-1$, we have $d\left( x,Tx\right) >1$ and
\begin{equation}
\max \left\{ d\left( x,Tx\right) ,\varphi \left( Tx\right) ,\varphi \left(
x\right) \right\} =\max \left\{ \left\vert x\right\vert ,\left\vert
x\right\vert ,\left\vert x\right\vert \right\} =\left\vert x\right\vert ,
\label{eqn1}
\end{equation}%
\begin{equation}
\max \left\{ d\left( x,x_{0}\right) ,\varphi \left( x\right) ,\varphi \left(
x_{0}\right) \right\} =\max \left\{ \left\vert x\right\vert ,\left\vert
x\right\vert ,0\right\} =\left\vert x\right\vert ,  \label{eqn2}
\end{equation}%
\begin{equation}
\max \left\{ M\left( x,x_{0}\right) ,\varphi \left( x\right) ,\varphi \left(
x_{0}\right) \right\} =\max \left\{ \frac{3\left\vert x\right\vert ^{2}}{%
1+\left\vert x\right\vert },\left\vert x\right\vert ,0\right\} =\frac{%
3\left\vert x\right\vert ^{2}}{1+\left\vert x\right\vert }.  \label{eqn3}
\end{equation}%
Considering $($\ref{eqn1}$)$ and $($\ref{eqn2}$)$, we see that $T$ can not
be a type $1$ $\varphi _{x_{0}}$-contraction with the point $x_{0}=0$ for
any $k\in \left( 0,1\right) $. Also, considering $($\ref{eqn1}$)$ and $($\ref%
{eqn3}$)$, we see that $T$ can not be a generalized type $1$ $\varphi
_{x_{0}}$-contraction with the point $x_{0}=0$ for any $k\in \left( 0,\frac{1%
}{2}\right) $. Notice that the circle $C_{0,1}$ is a $\varphi $-fixed circle
of $T$ and the disc $D_{-1,1}$ is a $\varphi $-fixed disc of $T$.
\end{example}

\begin{remark}
Example \ref{exm3} shows that the converse statements of Theorem \ref{thm:21}
and Theorem \ref{thm:22} $($resp. Theorem \ref{thm:23} and Theorem \ref%
{thm:24}$)$ are not true everwhen.
\end{remark}

\subsection{\textbf{$\protect\varphi $-Fixed Circle (resp. $\protect\varphi $%
-Fixed Disc) Results via Type $2$ $\protect\varphi _{x_{0}}$-Contractions}}

Now, we define a new type of a $\varphi _{x_{0}}$-contraction.

\begin{definition}
\label{def:23} Let $(X,d)$ be a metric space, $T$ be a self-mapping of $X$
and $\varphi :X\rightarrow \left[ 0,\infty \right) $ be a given function. If
there exists a point $x_{0}\in X$ such that
\begin{equation}
d\left( Tx,x\right) >0\Rightarrow \max \left\{ d\left( x,Tx\right) ,\varphi
\left( Tx\right) \right\} +\varphi \left( x\right) \leq k\max \left\{
d\left( x,x_{0}\right) ,\varphi \left( x\right) \right\} +\varphi \left(
x_{0}\right) ,  \label{eq:26}
\end{equation}%
for all $x\in X$ and some $k\in \left( 0,1\right) $, then $T$ is called a
type $2$ $\varphi _{x_{0}}$-contraction.
\end{definition}

\begin{theorem}
\label{thm:25} Let $(X,d)$ be a metric space, the number $\rho $ be defined
as in $($\ref{the number rho}$)$ and $T:X\rightarrow X$ be a type $2$ $%
\varphi _{x_{0}}$-contraction with the point $x_{0}\in X$ and the given
function\ $\varphi :X\rightarrow \left[ 0,\infty \right) $. If $x_{0}\in
Z_{\varphi }$ and%
\begin{equation*}
\varphi \left( x\right) \leq d\left( Tx,x\right)
\end{equation*}%
for all $x\in C_{x_{0},\rho }$, then the circle $C_{x_{0},\rho }$ is a $%
\varphi $-fixed circle of $T$.
\end{theorem}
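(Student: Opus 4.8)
The plan is to follow the same three-stage template used in the proof of Theorem \ref{thm:21}, but to adapt each step to the \emph{additive} structure of the type $2$ contraction inequality \eqref{eq:26}, where the term $\varphi(x)$ sits on the left and $\varphi(x_0)$ on the right rather than everything being bundled inside a single $\max$. First I would show $x_0\in Fix(T)$ by contradiction: if $x_0\neq Tx_0$ then $d(Tx_0,x_0)>0$, so \eqref{eq:26} applies with $x=x_0$, and using $\varphi(x_0)=0$ together with $d(x_0,x_0)=0$ the right-hand side collapses to $k\max\{0,0\}+0=0$. This forces $\max\{d(x_0,Tx_0),\varphi(Tx_0)\}+0\le 0$, hence $d(x_0,Tx_0)=0$, contradicting the assumption. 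Thus $x_0\in Fix(T)\cap Z_{\varphi}$.

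Next I would dispose of the trivial case $\rho=0$, where $C_{x_0,\rho}=\{x_0\}\subset Fix(T)\cap Z_{\varphi}$ immediately, and then assume $\rho>0$. Fixing an arbitrary $x\in C_{x_0,\rho}$ with $Tx\neq x$ gives $d(x,x_0)=\rho$ and $d(x,Tx)>0$, and \eqref{eq:26} combined with $\varphi(x_0)=0$ yields
\begin{equation*}
\max\{d(x,Tx),\varphi(Tx)\}+\varphi(x)\le k\max\{\rho,\varphi(x)\}.
\end{equation*}
The main (and essentially only) technical point is to split on the value of $\max\{\rho,\varphi(x)\}$ and, in each branch, exploit that $\varphi(x)\ge 0$ appears additively on the left. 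If the maximum equals $\rho$, then dropping the nonnegative $\varphi(x)$ on the left gives $d(x,Tx)\le k\rho$; but the definition \eqref{the number rho} of $\rho$ forces $d(x,Tx)\ge\rho$, so $\rho\le k\rho<\rho$, a contradiction with $k\in(0,1)$ and $\rho>0$. If instead the maximum equals $\varphi(x)$ (so $\varphi(x)\ge\rho>0$), subtracting $\varphi(x)$ from both sides gives $\max\{d(x,Tx),\varphi(Tx)\}\le(k-1)\varphi(x)<0$, which is impossible since the left-hand side is nonnegative.

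I expect the second branch to be the one worth stating carefully, since it is where the additive placement of $\varphi(x)$ (as opposed to the purely multiplicative treatment in Theorem \ref{thm:21}) does the work: the cancellation of $\varphi(x)$ is what produces the sign contradiction, and this is the feature that distinguishes the type $2$ argument from the type $1$ one. Having ruled out $Tx\neq x$ in both branches, I conclude $Tx=x$ for every $x\in C_{x_0,\rho}$, i.e. $C_{x_0,\rho}\subset Fix(T)$. Finally, the hypothesis $\varphi(x)\le d(Tx,x)$ on $C_{x_0,\rho}$ together with $d(Tx,x)=0$ forces $\varphi(x)=0$, so $x\in Z_{\varphi}$ as well; hence $C_{x_0,\rho}\subset Fix(T)\cap Z_{\varphi}$ and $C_{x_0,\rho}$ is a $\varphi$-fixed circle of $T$.
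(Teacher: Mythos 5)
Your proposal is correct and follows essentially the same route as the paper's proof: establish $x_{0}\in Fix(T)\cap Z_{\varphi}$ by contradiction from \eqref{eq:26}, dispose of the case $\rho =0$, and for $\rho >0$ split on whether $\max \left\{ d\left( x,x_{0}\right) ,\varphi \left( x\right) \right\} $ equals $\rho $ or $\varphi \left( x\right) $, reaching the same two contradictions via $\rho \leq d\left( x,Tx\right) $ and via $\varphi \left( x\right) \leq k\varphi \left( x\right) $. Your explicit remark that $\varphi \left( x\right) \geq \rho >0$ in the second branch is a small clarification the paper leaves implicit, but the argument is the same.
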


\begin{proof}
To show $x_{0}\in Fix\left( T\right) $, conversely, assume that $x_{0}\neq
Tx_{0}$. Then we have $d\left( Tx_{0},x_{0}\right) >0$ and using the
inequality (\ref{eq:26}) with the hypothesis $x_{0}\in Z_{\varphi }$, we find%
\begin{equation*}
\max \left\{ d\left( x_{0},Tx_{0}\right) ,\varphi \left( Tx_{0}\right)
\right\} +\varphi \left( x_{0}\right) \leq k\max \left\{ d\left(
x_{0},x_{0}\right) ,\varphi \left( x_{0}\right) \right\} +\varphi \left(
x_{0}\right) =0,
\end{equation*}%
and hence
\begin{equation*}
\max \left\{ d\left( x_{0},Tx_{0}\right) ,\varphi \left( Tx_{0}\right)
\right\} =0.
\end{equation*}%
This implies $d\left( x_{0},Tx_{0}\right) =0$, which is a contradiction with
our assumption. Therefore, it should be $Tx_{0}=x_{0}$, that is, $x_{0}\in
Fix\left( T\right) \cap Z_{\varphi }$.

Now we have two cases.

\textbf{Case 1.} If $\rho =0$, then clearly $C_{x_{0},\rho }=\left\{
x_{0}\right\} \subset Fix\left( T\right) \cap Z_{\varphi }$ and hence, the
circle $C_{x_{0},\rho }$ is a $\varphi $-fixed circle of $T$.

\textbf{Case 2.} Let $\rho >0$. For any $x\in C_{x_{0},\rho }$ with $Tx\neq
x $, we have
\begin{equation*}
\max \left\{ d\left( x,Tx\right) ,\varphi \left( Tx\right) \right\} +\varphi
\left( x\right) \leq k\max \left\{ d\left( x,x_{0}\right) ,\varphi \left(
x\right) \right\} .
\end{equation*}%
If $\max \left\{ d\left( x,x_{0}\right) ,\varphi \left( x\right) \right\}
=d\left( x,x_{0}\right) =\rho $, then by the definition of the number $\rho $%
, we get
\begin{equation*}
\max \left\{ d\left( x,Tx\right) ,\varphi \left( Tx\right) ,\varphi \left(
x\right) \right\} \leq k\rho \leq kd\left( x,Tx\right)
\end{equation*}%
and so $d\left( x,Tx\right) \leq kd\left( x,Tx\right) $, a contradiction by
the hypothesis $k\in \left( 0,1\right) $.

If $\max \left\{ d\left( x,x_{0}\right) ,\varphi \left( x\right) \right\}
=\varphi \left( x\right) $, we obtain%
\begin{equation*}
\max \left\{ d\left( x,Tx\right) ,\varphi \left( Tx\right) \right\} +\varphi
\left( x\right) \leq k\varphi \left( x\right)
\end{equation*}%
and hence $\varphi \left( x\right) \leq k\varphi \left( x\right) $, a
contradiction.

Then, it should be $Tx=x$, that is, $x\in Fix\left( T\right) $. By the
hypothesis $\varphi \left( x\right) \leq d\left( Tx,x\right) $, we have $%
\varphi \left( x\right) =0$ for all $x\in C_{x_{0},\rho }$. This implies $%
x\in Fix\left( T\right) \cap Z_{\varphi }$ for all $x\in C_{x_{0},\rho }$.
Consequently, we find $C_{x_{0},\rho }\subset Fix\left( T\right) \cap
Z_{\varphi }$ and hence, the circle $C_{x_{0},\rho }$ is a $\varphi $-fixed
circle of $T$.
\end{proof}

\begin{theorem}
\label{thm:26} Let $(X,d)$ be a metric space, the number $\rho $ be defined
as in $($\ref{the number rho}$)$ and $T:X\rightarrow X$ be a type $2$ $%
\varphi _{x_{0}}$-contraction with the point $x_{0}\in X$ and the given
function\ $\varphi :X\rightarrow \left[ 0,\infty \right) $. If $x_{0}\in
Z_{\varphi }$ and%
\begin{equation*}
\varphi \left( x\right) \leq d\left( Tx,x\right)
\end{equation*}%
for all $x\in D_{x_{0},\rho }$, then the circle $D_{x_{0},\rho }$ is a $%
\varphi $-fixed circle of $T$.
\end{theorem}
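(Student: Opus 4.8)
The plan is to mirror the argument of Theorem \ref{thm:25}, replacing the circle $C_{x_{0},\rho}$ by the disc $D_{x_{0},\rho}$ throughout and using the inequality $d(x,x_{0})\leq\rho$ (valid for every point of the disc) in place of the equality $d(x,x_{0})=\rho$ that held on the circle. First I would establish that $x_{0}\in Fix(T)\cap Z_{\varphi}$. Assuming for contradiction that $Tx_{0}\neq x_{0}$ gives $d(Tx_{0},x_{0})>0$, so the implication in (\ref{eq:26}) applies; combined with $x_{0}\in Z_{\varphi}$ (so that $\varphi(x_{0})=0$) this forces $\max\{d(x_{0},Tx_{0}),\varphi(Tx_{0})\}\leq 0$, whence $d(x_{0},Tx_{0})=0$, a contradiction. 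Thus $x_{0}$ is a fixed point lying in $Z_{\varphi}$.

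Next I would dispose of the degenerate case $\rho=0$, where $D_{x_{0},\rho}=\{x_{0}\}$ and the conclusion is immediate from the previous step. For $\rho>0$, take an arbitrary $x\in D_{x_{0},\rho}$ and suppose, toward a contradiction, that $Tx\neq x$. The crucial observation is that the definition of $\rho$ in (\ref{the number rho}) yields $d(x,Tx)\geq\rho$ for every non-fixed point $x$, while membership in the disc gives $d(x,x_{0})\leq\rho$; together these produce the chain $d(x,x_{0})\leq\rho\leq d(x,Tx)$, which is exactly the estimate that drove the circle proof. Feeding $Tx\neq x$ into (\ref{eq:26}) and splitting on the value of $\max\{d(x,x_{0}),\varphi(x)\}$, I would treat the case $\max=d(x,x_{0})$ via the chain $\max\{d(x,Tx),\varphi(Tx)\}+\varphi(x)\leq kd(x,x_{0})\leq kd(x,Tx)$, which gives $d(x,Tx)\leq kd(x,Tx)$, and the case $\max=\varphi(x)$ via $\varphi(x)\leq k\varphi(x)$; both contradict $k\in(0,1)$.

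Consequently $Tx=x$ for every $x\in D_{x_{0},\rho}$, and the hypothesis $\varphi(x)\leq d(Tx,x)=0$ then forces $\varphi(x)=0$, so $x\in Fix(T)\cap Z_{\varphi}$ for all such $x$. This yields $D_{x_{0},\rho}\subseteq Fix(T)\cap Z_{\varphi}$, i.e. $D_{x_{0},\rho}$ is a $\varphi$-fixed disc of $T$. I do not anticipate a genuine obstacle here: the whole adaptation hinges on the single fact that the infimum defining $\rho$ bounds $d(x,Tx)$ from below for every non-fixed point, so that $d(x,x_{0})\leq\rho$ on the disc still feeds correctly into the contraction inequality exactly as the equality $d(x,x_{0})=\rho$ did on the circle. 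The only point requiring slight care is to invoke the disc inequality rather than the circle equality in the first subcase, but this weakening is harmless because it merely strengthens the resulting bound.
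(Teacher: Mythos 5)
Your proposal is correct and follows exactly the route the paper intends: the paper's own proof of Theorem \ref{thm:26} simply states that it is similar to that of Theorem \ref{thm:25}, and your adaptation — replacing the equality $d(x,x_{0})=\rho$ on the circle by the chain $d(x,x_{0})\leq \rho \leq d(x,Tx)$ valid for non-fixed points of the disc — is precisely the modification required. The remaining steps (fixing $x_{0}$ first, handling $\rho=0$, the case split on $\max\{d(x,x_{0}),\varphi(x)\}$, and deducing $\varphi(x)=0$ from $\varphi(x)\leq d(Tx,x)=0$) match the paper's argument for Theorem \ref{thm:25} verbatim.
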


\begin{proof}
The proof is similar to the proof of Theorem \ref{thm:25}.
\end{proof}

\begin{definition}
\label{def:24} Let $(X,d)$ be a metric space, $T$ be a self-mapping of $X$
and $\varphi :X\rightarrow \left[ 0,\infty \right) $ be a given function. If
there exists a point $x_{0}\in X$ such that
\begin{equation}
d\left( Tx,x\right) >0\Rightarrow \max \left\{ d\left( x,Tx\right) ,\varphi
\left( Tx\right) \right\} +\varphi \left( x\right) \leq k\max \left\{
M\left( x,x_{0}\right) ,\varphi \left( x\right) \right\} +\varphi \left(
x_{0}\right) ,  \label{eq:27}
\end{equation}%
for all $x\in X$ and some $k\in \left( 0,\frac{1}{2}\right) $, then $T$ is
called a generalized type $2$ $\varphi _{x_{0}}$-contraction.
\end{definition}

\begin{theorem}
\label{thm:27} Let $(X,d)$ be a metric space, the number $\mu $ be defined
as in $($\ref{the number mu}$)$ and $T:X\rightarrow X$ be a generalized type
$2$ $\varphi _{x_{0}}$-contraction with the point $x_{0}\in X$ and the given
function\ $\varphi :X\rightarrow \left[ 0,\infty \right) $. If $x_{0}\in
Z_{\varphi }$ and the inequalities%
\begin{equation*}
d\left( Tx,x_{0}\right) \leq \mu \text{,}
\end{equation*}%
\begin{equation*}
\varphi \left( x\right) \leq d\left( Tx,x\right)
\end{equation*}%
hold for all $x\in C_{x_{0},\mu }$, then the circle $C_{x_{0},\mu }$ is a $%
\varphi $-fixed circle of $T$.
\end{theorem}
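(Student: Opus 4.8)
The plan is to run the argument of Theorem~\ref{thm:23} almost verbatim, replacing the single--maximum contraction used there by the additive ``type~$2$'' inequality (\ref{eq:27}), exactly in the way the proof of Theorem~\ref{thm:25} adapts that of Theorem~\ref{thm:21}. Since $x_{0}\in Z_{\varphi}$, i.e.\ $\varphi(x_{0})=0$, the implication (\ref{eq:27}) reduces, whenever $d(Tx,x)>0$, to
\begin{equation*}
\max\{d(x,Tx),\varphi(Tx)\}+\varphi(x)\leq k\max\{M(x,x_{0}),\varphi(x)\}.
\end{equation*}
The key extra inputs, as in Theorem~\ref{thm:23}, are the identity $M(x_{0},x_{0})=d(x_{0},Tx_{0})$ recorded just before that theorem and the estimate on $M(x,x_{0})$ coming from the hypothesis $d(Tx,x_{0})\leq\mu$.

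First I would establish $x_{0}\in Fix(T)\cap Z_{\varphi}$. Assuming $x_{0}\neq Tx_{0}$ gives $d(Tx_{0},x_{0})>0$, so the displayed inequality applies at $x=x_{0}$; its right-hand side equals $k\,d(x_{0},Tx_{0})$ while its left-hand side is at least $d(x_{0},Tx_{0})$, forcing $d(x_{0},Tx_{0})\leq k\,d(x_{0},Tx_{0})$, which is impossible for $k\in(0,\frac{1}{2})$. Hence $Tx_{0}=x_{0}$, and $\varphi(x_{0})=0$ gives $x_{0}\in Fix(T)\cap Z_{\varphi}$. If $\mu=0$ then $C_{x_{0},\mu}=\{x_{0}\}$ and the proof is complete, so next I would assume $\mu>0$ and fix an arbitrary $x\in C_{x_{0},\mu}$ with $Tx\neq x$, seeking a contradiction.

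For such $x$ I would split on the maximum on the right. If it is $\varphi(x)$, the additive left-hand side gives $d(x,Tx)+\varphi(x)\leq k\varphi(x)$, whence $d(x,Tx)\leq(k-1)\varphi(x)\leq0$, contradicting $Tx\neq x$. If it is $M(x,x_{0})$, then since the left-hand side dominates $d(x,Tx)$ I obtain $d(x,Tx)\leq kM(x,x_{0})$, and I would estimate, using $d(x,x_{0})=\mu$, $d(Tx,x_{0})\leq\mu$ and $d(x_{0},Tx_{0})=0$,
\begin{equation*}
M(x,x_{0})\leq\max\Big\{\mu,\,d(x,Tx),\,\tfrac{2\mu^{2}}{1+d(x,Tx)}\Big\}.
\end{equation*}
The subcase $M(x,x_{0})=d(x,Tx)$ yields $d(x,Tx)\leq kd(x,Tx)$, impossible; and the subcase $M(x,x_{0})=\frac{2\mu^{2}}{1+d(x,Tx)}$ is closed by the very definition of $\mu$, namely $\mu^{2}\leq d(x,Tx)$, which gives $d(x,Tx)\leq k\frac{2\mu^{2}}{1+d(x,Tx)}<2k\,d(x,Tx)$, again impossible since $2k<1$. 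Once $Tx=x$ is forced for every $x\in C_{x_{0},\mu}$, the remaining hypothesis $\varphi(x)\leq d(Tx,x)=0$ upgrades each such $x$ to $Z_{\varphi}$, so $C_{x_{0},\mu}\subseteq Fix(T)\cap Z_{\varphi}$.

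I expect the main obstacle to be the $M(x,x_{0})$--estimate and its case split rather than any new idea: one must check that the bound above genuinely reduces to the two decisive subcases. The delicate point is the residual possibility that $\mu$ itself realizes the maximum, i.e.\ $M(x,x_{0})=d(x,x_{0})=\mu$; here I would lean on $M(x,x_{0})\geq d(x,Tx)$ together with $d(x,Tx)\geq\mu^{2}$ (the definition of $\mu$) to compare $\mu$ with $d(x,Tx)$ and with the fractional term, folding this situation into one of the two treated subcases. The additive structure of (\ref{eq:27}) turns out to be harmless — indeed helpful — because the extra nonnegative summand $\varphi(x)$ only strengthens the left-hand side, so the estimates $d(x,Tx)\leq kM(x,x_{0})$ and $d(x,Tx)\leq k\varphi(x)$ pass through unchanged; the only care needed is to keep $\varphi(x_{0})=0$ in play when simplifying (\ref{eq:27}).
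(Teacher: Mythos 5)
Your proposal reproduces the paper's proof of Theorem \ref{thm:27} essentially verbatim: the same reduction at $x_{0}$ via the identity $M(x_{0},x_{0})=d(x_{0},Tx_{0})$, the same split into $\mu=0$ and $\mu>0$, and the same case analysis on $\max\{M(x,x_{0}),\varphi(x)\}$ ending in the two subcases $M(x,x_{0})=d(x,Tx)$ and $M(x,x_{0})=\frac{2\mu^{2}}{1+d(x,Tx)}$, closed by $\mu\leq\sqrt{d(x,Tx)}$ and $2k<1$. The one point you flag as delicate --- the residual possibility that the upper bound on $M(x,x_{0})$ is realized by $\mu$ itself --- is silently skipped in the paper's proof as well, so your attempt matches the published argument; just note that your sketched remedy (comparing $\mu$ with $d(x,Tx)\geq\mu^{2}$) only yields $\mu\leq k$ rather than an outright contradiction, so that subcase remains the weak spot of both versions.
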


\begin{proof}
Suppose that $d\left( Tx_{0},x_{0}\right) >0$. Using the inequality (\ref%
{eq:27}) and the hypothesis $x_{0}\in Z_{\varphi }$, we find%
\begin{eqnarray*}
\max \left\{ d\left( x_{0},Tx_{0}\right) ,\varphi \left( Tx_{0}\right)
\right\} +\varphi \left( x_{0}\right) &\leq &k\max \left\{ M\left(
x_{0},x_{0}\right) ,\varphi \left( x_{0}\right) \right\} +\varphi \left(
x_{0}\right) \\
&\leq &kM\left( x_{0},x_{0}\right) =kd(x_{0},Tx_{0})
\end{eqnarray*}%
and hence
\begin{equation*}
\max \left\{ d\left( x_{0},Tx_{0}\right) ,\varphi \left( Tx_{0}\right)
\right\} \leq kd(x_{0},Tx_{0}).
\end{equation*}%
This implies $d\left( x_{0},Tx_{0}\right) \leq kd(x_{0},Tx_{0})$, which is a
contradiction since $k\in \left( 0,\frac{1}{2}\right) $. It should be $%
Tx_{0}=x_{0}$, that is, we get $x_{0}\in Fix\left( T\right) $. Therefore, we
have $x_{0}\in Fix\left( T\right) \cap Z_{\varphi }$.

If $\mu =0$, then clearly $C_{x_{0},\mu }=\left\{ x_{0}\right\} \subset
Fix\left( T\right) \cap Z_{\varphi }$ and hence, the circle $C_{x_{0},\mu }$
is a $\varphi $-fixed circle of $T$.

Now, let $\mu >0$. For any $x\in C_{x_{0},\mu }$ with $Tx\neq x$, we have
\begin{equation*}
\max \left\{ d\left( x,Tx\right) ,\varphi \left( Tx\right) \right\} +\varphi
\left( x\right) \leq k\max \left\{ M\left( x,x_{0}\right) ,\varphi \left(
x\right) \right\} .
\end{equation*}%
If $\max \left\{ M\left( x,x_{0}\right) ,\varphi \left( x\right) \right\}
=\varphi \left( x\right) $, we obtain%
\begin{equation*}
\max \left\{ d\left( x,Tx\right) ,\varphi \left( Tx\right) \right\} +\varphi
\left( x\right) \leq k\varphi \left( x\right)
\end{equation*}%
and hence $\varphi \left( x\right) \leq k\varphi \left( x\right) $, a
contradiction by the hypothesis $k\in \left( 0,\frac{1}{2}\right) $.

Let $\max \left\{ M\left( x,x_{0}\right) ,\varphi \left( x\right) \right\}
=M\left( x,x_{0}\right) $. We have
\begin{eqnarray*}
M(x,x_{0}) &=&\max \left\{ d(x,x_{0}),d(x,Tx),d(x_{0},Tx_{0}),\left[ \frac{%
d(x,Tx_{0})+d(x_{0},Tx)}{1+d(x,Tx)+d(x_{0},Tx_{0})}\right] d(x,x_{0})\right\}
\\
&\leq &\max \left\{ \mu ,d(x,Tx),0,\left[ \frac{\mu +\mu }{1+d(x,Tx)}\right]
\mu \right\} \\
&=&\max \left\{ \mu ,d(x,Tx),0,\frac{2\mu ^{2}}{1+d(x,Tx)}\right\} .
\end{eqnarray*}%
If $M(x,x_{0})=d(x,Tx)$ then we get
\begin{equation*}
\max \left\{ d\left( x,Tx\right) ,\varphi \left( Tx\right) \right\} +\varphi
\left( x\right) \leq kd\left( x,Tx\right)
\end{equation*}%
and so $d\left( x,Tx\right) \leq kd\left( x,Tx\right) $, a contradiction by
the hypothesis $k\in \left( 0,\frac{1}{2}\right) $. If $M(x,x_{0})=\frac{%
2\mu ^{2}}{1+d(x,Tx)}$ then by the definition of the number $\mu $ we have%
\begin{eqnarray*}
\max \left\{ d\left( x,Tx\right) ,\varphi \left( Tx\right) \right\} +\varphi
\left( x\right) &\leq &k\frac{2\mu ^{2}}{1+d(x,Tx)} \\
&\leq &\frac{2k\left( \sqrt{d(x,Tx)}\right) ^{2}}{1+d(x,Tx)} \\
&<&2k(x,Tx)
\end{eqnarray*}%
and hence $d\left( x,Tx\right) <2k(x,Tx)$, a contradiction by the hypothesis
$k\in \left( 0,\frac{1}{2}\right) $. Then, it should be $Tx=x$, that is, $%
x\in Fix\left( T\right) $ in all of the above cases. By the hypothesis $%
\varphi \left( x\right) \leq d\left( Tx,x\right) $, we have $\varphi \left(
x\right) =0$ for all $x\in C_{x_{0},\mu }$. This implies $x\in Fix\left(
T\right) \cap Z_{\varphi }$ for all $x\in C_{x_{0},\mu }$. Consequently, we
find $C_{x_{0},\mu }\subset Fix\left( T\right) \cap Z_{\varphi }$ and hence,
the circle $C_{x_{0},\mu }$ is a $\varphi $-fixed circle of $T$.
\end{proof}

\begin{theorem}
\label{thm:28} Let $(X,d)$ be a metric space, the number $\mu $ be defined
as in $($\ref{the number mu}$)$ and $T:X\rightarrow X$ be a generalized type
$2$ $\varphi _{x_{0}}$-contraction with the point $x_{0}\in X$ and the given
function\ $\varphi :X\rightarrow \left[ 0,\infty \right) $. If $x_{0}\in
Z_{\varphi }$ and the inequalities%
\begin{equation*}
d\left( Tx,x_{0}\right) \leq \mu \text{,}
\end{equation*}%
\begin{equation*}
\varphi \left( x\right) \leq d\left( Tx,x\right)
\end{equation*}%
hold for all $x\in D_{x_{0},\mu }$, then the disc $D_{x_{0},\mu }$ is a $%
\varphi $-fixed disc of $T$.
\end{theorem}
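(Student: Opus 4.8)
The plan is to follow the argument of Theorem \ref{thm:27} almost verbatim, replacing the circle $C_{x_{0},\mu }$ by the disc $D_{x_{0},\mu }$ throughout, and to observe that the boundary equality $d(x,x_{0})=\mu $ is never actually exploited in that proof: only the inequality $d(x,x_{0})\leq \mu $ is used, and this holds for every $x\in D_{x_{0},\mu }$ by definition of the disc. Consequently the disc version is not a mere restatement, but it does go through with no genuinely new difficulty once this observation is made.

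First I would establish that $x_{0}\in Fix\left( T\right) \cap Z_{\varphi }$ exactly as in Theorem \ref{thm:27}. Assuming $x_{0}\neq Tx_{0}$ gives $d\left( Tx_{0},x_{0}\right) >0$, so the defining inequality (\ref{eq:27}) of a generalized type $2$ $\varphi _{x_{0}}$-contraction applies; combining it with $x_{0}\in Z_{\varphi }$ and the identity $M\left( x_{0},x_{0}\right) =d(x_{0},Tx_{0})$ recorded before Theorem \ref{thm:23} yields $d\left( x_{0},Tx_{0}\right) \leq kd(x_{0},Tx_{0})$, contradicting $k\in \left( 0,\frac{1}{2}\right) $. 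Hence $Tx_{0}=x_{0}$ and $x_{0}\in Fix\left( T\right) \cap Z_{\varphi }$. If $\mu =0$ then $D_{x_{0},\mu }=\left\{ x_{0}\right\} $ and we are finished.

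For $\mu >0$, I would fix an arbitrary $x\in D_{x_{0},\mu }$ with $Tx\neq x$ and apply (\ref{eq:27}). The case $\max \left\{ M\left( x,x_{0}\right) ,\varphi \left( x\right) \right\} =\varphi \left( x\right) $ forces $\varphi \left( x\right) \leq k\varphi \left( x\right) $, impossible for $k\in \left( 0,\frac{1}{2}\right) $. In the case $\max \left\{ M\left( x,x_{0}\right) ,\varphi \left( x\right) \right\} =M\left( x,x_{0}\right) $ I would bound $M\left( x,x_{0}\right) $: since $x_{0}=Tx_{0}$, the hypotheses $d(x,x_{0})\leq \mu $ and $d\left( Tx,x_{0}\right) \leq \mu $ give $M\left( x,x_{0}\right) \leq \max \left\{ \mu ,d(x,Tx),0,\frac{2\mu ^{2}}{1+d(x,Tx)}\right\} $. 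The subcase $M\left( x,x_{0}\right) =d(x,Tx)$ yields $d\left( x,Tx\right) \leq kd\left( x,Tx\right) $, and the subcase $M\left( x,x_{0}\right) =\frac{2\mu ^{2}}{1+d(x,Tx)}$ yields, via $\mu \leq \sqrt{d(x,Tx)}$ from the definition of the number $\mu $, the estimate $d\left( x,Tx\right) <2kd\left( x,Tx\right) $; both contradict $k\in \left( 0,\frac{1}{2}\right) $. Therefore $Tx=x$ for every $x\in D_{x_{0},\mu }$, and then the hypothesis $\varphi \left( x\right) \leq d\left( Tx,x\right) =0$ forces $\varphi \left( x\right) =0$, so $x\in Z_{\varphi }$.

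The only point requiring care — and the reason the passage from Theorem \ref{thm:27} is worth verifying rather than asserting — is the estimate on $M\left( x,x_{0}\right) $: I must confirm that every distance appearing there, namely $d(x,x_{0})$, $d(x,Tx_{0})=d(x,x_{0})$, $d(x_{0},Tx_{0})=0$, and $d(x_{0},Tx)$, is controlled by the disc hypotheses rather than by the sharper circle equality. Since each of these is bounded using $d(x,x_{0})\leq \mu $ together with $d\left( Tx,x_{0}\right) \leq \mu $ and $Tx_{0}=x_{0}$, the extension from the circle to the disc is automatic, and we conclude $D_{x_{0},\mu }\subseteq Fix\left( T\right) \cap Z_{\varphi }$, i.e.\ the disc $D_{x_{0},\mu }$ is a $\varphi $-fixed disc of $T$.
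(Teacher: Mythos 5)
Your proof is correct in the sense intended by the paper and takes exactly the same route: the paper's own proof of Theorem \ref{thm:28} is simply the remark that the argument of Theorem \ref{thm:27} carries over, and your observation that only the inequality $d\left( x,x_{0}\right) \leq \mu $ (never the boundary equality) is used is precisely the point that makes the transfer to the disc work. Note only that you inherit, verbatim, the paper's case analysis on the upper bound for $M\left( x,x_{0}\right) $, including its silence on the subcase in which that maximum is attained at the first entry $\mu $ itself.
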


\begin{proof}
The proof is similar to the proof of Theorem \ref{thm:27}.
\end{proof}

\subsection{\textbf{$\protect\varphi $-Fixed Circle (resp. $\protect\varphi $%
-Fixed Disc) Results via Type $3$ $\protect\varphi _{x_{0}}$-Contractions}}

Now, we define a new type of a $\varphi _{x_{0}}$-contraction.

\begin{definition}
\label{def:25} Let $(X,d)$ be a metric space, $T$ be a self-mapping of $X$
and $\varphi :X\rightarrow \left[ 0,\infty \right) $ be a given function. If
there exists a point $x_{0}\in X$ such that
\begin{equation}
d\left( Tx,x\right) >0\Rightarrow d\left( x,Tx\right) +\varphi \left(
Tx\right) +\varphi \left( x\right) \leq k\left[ d\left( x,x_{0}\right)
+\varphi \left( x\right) +\varphi \left( x_{0}\right) \right] ,
\label{eq:28}
\end{equation}%
for all $x\in X$ and some $k\in \left( 0,1\right) $, then $T$ is called a
type $3$ $\varphi _{x_{0}}$-contraction.
\end{definition}

\begin{theorem}
\label{thm:29} Let $(X,d)$ be a metric space, the number $\rho $ be defined
as in $($\ref{the number rho}$)$ and $T:X\rightarrow X$ be a type $3$ $%
\varphi _{x_{0}}$-contraction with the point $x_{0}\in X$ and the given
function\ $\varphi :X\rightarrow \left[ 0,\infty \right) $. If $x_{0}\in
Z_{\varphi }$ and%
\begin{equation*}
\varphi \left( x\right) \leq d\left( Tx,x\right)
\end{equation*}%
for all $x\in C_{x_{0},\rho }$, then the circle $C_{x_{0},\rho }$ is a $%
\varphi $-fixed circle of $T$.
\end{theorem}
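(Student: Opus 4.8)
The plan is to mirror the structure of the proofs of Theorem \ref{thm:21} and Theorem \ref{thm:25}, exploiting the fact that the defining inequality (\ref{eq:28}) of a type $3$ $\varphi_{x_0}$-contraction is additive rather than of max-type, which should make the circle step even more direct than in those earlier results (in particular, no internal case split is needed once $\rho>0$).

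First I would establish that $x_0\in Fix(T)\cap Z_\varphi$. Arguing by contradiction, assume $x_0\neq Tx_0$, so that $d(Tx_0,x_0)>0$ and (\ref{eq:28}) applies with $x=x_0$. Since $x_0\in Z_\varphi$ gives $\varphi(x_0)=0$ and $d(x_0,x_0)=0$, the right-hand side collapses to $0$, forcing $d(x_0,Tx_0)+\varphi(Tx_0)\leq 0$; as both summands are nonnegative, $d(x_0,Tx_0)=0$, a contradiction. Hence $Tx_0=x_0$ and, together with $\varphi(x_0)=0$, we obtain $x_0\in Fix(T)\cap Z_\varphi$.

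Next I would split into the two cases $\rho=0$ and $\rho>0$. When $\rho=0$ the circle degenerates to $C_{x_0,\rho}=\{x_0\}$ and the conclusion is immediate from the previous step. When $\rho>0$, take any $x\in C_{x_0,\rho}$ with $Tx\neq x$; then $d(x,x_0)=\rho$, $d(x,Tx)>0$, and by the definition of $\rho$ we have $d(x,Tx)\geq\rho$. Substituting $\varphi(x_0)=0$ and $d(x,x_0)=\rho$ into (\ref{eq:28}), then discarding the nonnegative term $\varphi(Tx)$ from the left-hand side and using $d(x,Tx)\geq\rho$, yields
\begin{equation*}
\rho+\varphi(x)\leq d(x,Tx)+\varphi(x)\leq k\big(\rho+\varphi(x)\big),
\end{equation*}
so $(1-k)\big(\rho+\varphi(x)\big)\leq 0$, which is impossible since $k\in(0,1)$ and $\rho>0$. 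Therefore $Tx=x$ for every $x\in C_{x_0,\rho}$.

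Finally, invoking the geometric hypothesis $\varphi(x)\leq d(Tx,x)=d(x,x)=0$ on $C_{x_0,\rho}$ forces $\varphi(x)=0$, so $x\in Z_\varphi$; combining this with $Tx=x$ gives $C_{x_0,\rho}\subset Fix(T)\cap Z_\varphi$, which is exactly the assertion that $C_{x_0,\rho}$ is a $\varphi$-fixed circle. The only place demanding any care is the single chain of inequalities in the case $\rho>0$: one must retain $\varphi(x)$ on both sides and discard only $\varphi(Tx)$ so that the contraction constant produces a sharp contradiction. Because (\ref{eq:28}) is additive, this step is cleaner than the max-type case analyses in the proofs of Theorem \ref{thm:21} and Theorem \ref{thm:25}, so I do not anticipate a genuine obstacle.
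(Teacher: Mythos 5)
Your proposal is correct and follows essentially the same route as the paper's proof: establish $x_0\in Fix(T)\cap Z_\varphi$ by applying (\ref{eq:28}) at $x=x_0$, split on $\rho=0$ versus $\rho>0$, and in the latter case use $d(x,x_0)=\rho$, $\varphi(x_0)=0$ and $d(x,Tx)\geq\rho$ to contradict $k<1$, then invoke $\varphi(x)\leq d(Tx,x)$ to conclude. The only cosmetic difference is that the paper phrases the contradiction as $d(x,Tx)+\varphi(Tx)<\rho$, hence $d(x,Tx)<\rho$, against the definition of $\rho$, whereas you keep $\varphi(x)$ on both sides and derive $(1-k)(\rho+\varphi(x))\leq 0$; both are the same argument.
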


\begin{proof}
If $d\left( Tx_{0},x_{0}\right) >0$, using the inequality (\ref{eq:28}) and
the hypothesis $x_{0}\in Z_{\varphi }$, we find%
\begin{equation*}
d\left( x_{0},Tx_{0}\right) +\varphi \left( Tx_{0}\right) +\varphi \left(
x_{0}\right) \leq k\left[ d\left( x_{0},x_{0}\right) +\varphi \left(
x_{0}\right) +\varphi \left( x_{0}\right) \right] =0,
\end{equation*}%
and hence
\begin{equation*}
d\left( x_{0},Tx_{0}\right) +\varphi \left( Tx_{0}\right) =0.
\end{equation*}%
This implies $d\left( x_{0},Tx_{0}\right) =0$, which is a contradiction with
our assumption. Therefore, it should be $Tx_{0}=x_{0}$, that is, $x_{0}\in
Fix\left( T\right) \cap Z_{\varphi }$.

Now we have two cases.

\textbf{Case 1.} If $\rho =0$, then clearly $C_{x_{0},\rho }=\left\{
x_{0}\right\} \subset Fix\left( T\right) \cap Z_{\varphi }$ and hence, the
circle $C_{x_{0},\rho }$ is a $\varphi $-fixed circle of $T$.

\textbf{Case 2.} Let $\rho >0$. For any $x\in C_{x_{0},\rho }$ with $Tx\neq
x $, we have
\begin{eqnarray*}
d\left( x,Tx\right) +\varphi \left( Tx\right) +\varphi \left( x\right) &\leq
&k\left[ d\left( x,x_{0}\right) +\varphi \left( x\right) \right] \\
&<&d\left( x,x_{0}\right) +\varphi \left( x\right)
\end{eqnarray*}%
and so,%
\begin{equation*}
d\left( x,Tx\right) +\varphi \left( Tx\right) <d\left( x,x_{0}\right) =\rho .
\end{equation*}%
This last inequality implies $d\left( x,Tx\right) <\rho $, which is a
contradiction by the definition of $\rho .$ Then, it should be $Tx=x$, that
is, $x\in Fix\left( T\right) $. Since we have $\varphi \left( x\right) \leq
d\left( Tx,x\right) $ for all $x\in C_{x_{0},\rho }$, we obtain $\varphi
\left( x\right) =0$ and hence, $x\in Fix\left( T\right) \cap Z_{\varphi }$
for all $x\in C_{x_{0},\rho }$. Consequently, we get $C_{x_{0},\rho }\subset
Fix\left( T\right) \cap Z_{\varphi }$, that is, the circle $C_{x_{0},\rho }$
is a $\varphi $-fixed circle of $T$.
\end{proof}

\begin{theorem}
\label{thm:210} Let $(X,d)$ be a metric space, the number $\rho $ be defined
as in $($\ref{the number rho}$)$ and $T:X\rightarrow X$ be a type $3$ $%
\varphi _{x_{0}}$-contraction with the point $x_{0}\in X$ and the given
function\ $\varphi :X\rightarrow \left[ 0,\infty \right) $. If $x_{0}\in
Z_{\varphi }$ and%
\begin{equation*}
\varphi \left( x\right) \leq d\left( Tx,x\right)
\end{equation*}%
for all $x\in D_{x_{0},\rho }$, then the disc $D_{x_{0},\rho }$ is a $%
\varphi $-fixed disc of $T$.
\end{theorem}

\begin{proof}
The proof is similar to the proof of Theorem \ref{thm:27}.
\end{proof}

\begin{definition}
\label{def:26} Let $(X,d)$ be a metric space, $T$ be a self-mapping of $X$
and $\varphi :X\rightarrow \left[ 0,\infty \right) $ be a given function. If
there exists a point $x_{0}\in X$ such that
\begin{equation}
d\left( Tx,x\right) >0\Rightarrow d\left( x,Tx\right) +\varphi \left(
Tx\right) +\varphi \left( x\right) \leq k\left[ M\left( x,x_{0}\right)
+\varphi \left( x\right) +\varphi \left( x_{0}\right) \right] ,
\label{eq:29}
\end{equation}%
for all $x\in X$ and some $k\in \left( 0,\frac{1}{2}\right) $, then $T$ is
called a generalized type $3$ $\varphi _{x_{0}}$-contraction.
\end{definition}

\begin{theorem}
\label{thm:211} Let $(X,d)$ be a metric space, the number $\mu $ be defined
as in $($\ref{the number mu}$)$ and $T:X\rightarrow X$ be a generalized type
$3$ $\varphi _{x_{0}}$-contraction with the point $x_{0}\in X$ and the given
function\ $\varphi :X\rightarrow \left[ 0,\infty \right) $. If $x_{0}\in
Z_{\varphi }$ and the inequalities%
\begin{equation*}
d\left( Tx,x_{0}\right) \leq \mu \text{,}
\end{equation*}%
\begin{equation*}
\varphi \left( x\right) \leq d\left( Tx,x\right)
\end{equation*}%
hold for all $x\in C_{x_{0},\mu }$, then the circle $C_{x_{0},\mu }$ is a $%
\varphi $-fixed circle of $T$.
\end{theorem}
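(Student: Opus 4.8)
The plan is to mimic the proof of Theorem \ref{thm:27}, the type $2$ analogue, replacing the max-type aggregation of (\ref{eq:27}) with the additive aggregation of (\ref{eq:29}) and reusing verbatim the $\mu$-estimate on the auxiliary number $M$. First I would establish that the center already lies in $Fix(T)\cap Z_{\varphi}$. Assuming to the contrary that $Tx_{0}\neq x_{0}$, so that $d(Tx_{0},x_{0})>0$, I would apply (\ref{eq:29}) with $x=x_{0}$, invoking the hypothesis $x_{0}\in Z_{\varphi}$ (hence $\varphi(x_{0})=0$) together with the identity $M(x_{0},x_{0})=d(x_{0},Tx_{0})$ computed just before Theorem \ref{thm:23}. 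This yields $d(x_{0},Tx_{0})+\varphi(Tx_{0})\leq k\,d(x_{0},Tx_{0})$, whence $d(x_{0},Tx_{0})\leq k\,d(x_{0},Tx_{0})$, contradicting $k\in(0,\tfrac{1}{2})$. So $Tx_{0}=x_{0}$ and $x_{0}\in Fix(T)\cap Z_{\varphi}$.

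Next I would dispose of the degenerate case $\mu=0$, where $C_{x_{0},\mu}=\{x_{0}\}\subset Fix(T)\cap Z_{\varphi}$ immediately. For $\mu>0$, I would fix an arbitrary $x\in C_{x_{0},\mu}$ with $Tx\neq x$ and apply (\ref{eq:29}), again using $\varphi(x_{0})=0$, to obtain $d(x,Tx)+\varphi(Tx)+\varphi(x)\leq k[M(x,x_{0})+\varphi(x)]$. A pleasant simplification relative to the type $2$ case is that the additive form needs no separate ``$\max=\varphi(x)$'' branch: since $\varphi(Tx)\geq 0$ and $(1-k)\varphi(x)\geq 0$ for $k\in(0,\tfrac{1}{2})$, the inequality collapses at once to $d(x,Tx)\leq k\,M(x,x_{0})$. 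Then, exploiting $Tx_{0}=x_{0}$, the equality $d(x,x_{0})=\mu$, and the hypothesis $d(Tx,x_{0})\leq\mu$, I would bound, exactly as in Theorem \ref{thm:23} and Theorem \ref{thm:27},
\[
M(x,x_{0})\leq\max\left\{\mu,\,d(x,Tx),\,\frac{2\mu^{2}}{1+d(x,Tx)}\right\}.
\]

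Finally I would run the subcase analysis on this maximum. If $M(x,x_{0})=d(x,Tx)$, then $d(x,Tx)\leq k\,d(x,Tx)$, impossible. If $M(x,x_{0})=\frac{2\mu^{2}}{1+d(x,Tx)}$, then invoking the defining inequality $\mu^{2}\leq d(x,Tx)$ (valid because $Tx\neq x$ forces $\sqrt{d(x,Tx)}\geq\mu$) gives $d(x,Tx)\leq\frac{2k\,d(x,Tx)}{1+d(x,Tx)}<2k\,d(x,Tx)$, contradicting $k<\tfrac{1}{2}$. I expect this fractional subcase to be the main obstacle, as it is the only point where the choice $\mu=\inf\sqrt{d(Tx,x)}$ (rather than $\rho$) and the sharper restriction $k<\tfrac{1}{2}$ are genuinely used in tandem. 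Each subcase being contradictory, it must be that $Tx=x$ for every $x\in C_{x_{0},\mu}$; the hypothesis $\varphi(x)\leq d(Tx,x)=0$ then forces $\varphi(x)=0$, so $C_{x_{0},\mu}\subset Fix(T)\cap Z_{\varphi}$, proving that $C_{x_{0},\mu}$ is a $\varphi$-fixed circle of $T$.
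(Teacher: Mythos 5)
Your proposal follows the paper's proof of Theorem~\ref{thm:211} essentially step for step: the same contradiction at the centre using $M(x_{0},x_{0})=d(x_{0},Tx_{0})$, the same $\mu=0$ dichotomy, the same estimate $M(x,x_{0})\leq \max \left\{ \mu ,d(x,Tx),\frac{2\mu ^{2}}{1+d(x,Tx)}\right\} $, and the same two contradictory subcases, so the approach is the paper's. Your only deviation is the harmless streamlining of collapsing the additive inequality directly to $d(x,Tx)\leq kM(x,x_{0})$ instead of the paper's intermediate $d(x,Tx)+\varphi (Tx)<M(x,x_{0})$; note that, exactly like the paper, you leave the branch where the bounding maximum equals $\mu $ untreated.
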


\begin{proof}
If $d\left( Tx_{0},x_{0}\right) >0$, using the inequality (\ref{eq:29}) and
the hypothesis $x_{0}\in Z_{\varphi }$, we find%
\begin{eqnarray*}
d\left( x_{0},Tx_{0}\right) +\varphi \left( Tx_{0}\right) +\varphi \left(
x_{0}\right) &\leq &k\left[ M\left( x_{0},x_{0}\right) +\varphi \left(
x_{0}\right) +\varphi \left( x_{0}\right) \right] \\
&\leq &kM\left( x_{0},x_{0}\right) =kd(x_{0},Tx_{0})
\end{eqnarray*}%
and hence
\begin{equation*}
d\left( x_{0},Tx_{0}\right) +\varphi \left( Tx_{0}\right) \leq
kd(x_{0},Tx_{0}).
\end{equation*}%
This implies $d\left( x_{0},Tx_{0}\right) \leq kd(x_{0},Tx_{0})$, which is a
contradiction since $k\in \left( 0,\frac{1}{2}\right) $. Then, it should be $%
Tx_{0}=x_{0}$, that is, we get $x_{0}\in Fix\left( T\right) $. Therefore, we
have $x_{0}\in Fix\left( T\right) \cap Z_{\varphi }$.

If $\mu =0$, then clearly $C_{x_{0},\mu }=\left\{ x_{0}\right\} \subset
Fix\left( T\right) \cap Z_{\varphi }$ and hence, the circle $C_{x_{0},\mu }$
is a $\varphi $-fixed circle of $T$.

Now, let $\mu >0$. For any $x\in C_{x_{0},\mu }$ with $Tx\neq x$, we have
\begin{eqnarray*}
d\left( x,Tx\right) +\varphi \left( Tx\right) +\varphi \left( x\right) &\leq
&k\left[ M\left( x,x_{0}\right) +\varphi \left( x\right) \right] \\
&<&M\left( x,x_{0}\right) +\varphi \left( x\right)
\end{eqnarray*}%
and so,%
\begin{equation*}
d\left( x,Tx\right) +\varphi \left( Tx\right) <M\left( x,x_{0}\right) .
\end{equation*}%
By the inequality $d\left( Tx,x_{0}\right) \leq \mu $, we have
\begin{equation*}
M(x,x_{0})\leq \max \left\{ \mu ,d(x,Tx),0,\frac{2\mu ^{2}}{1+d(x,Tx)}%
\right\} .
\end{equation*}%
If $M(x,x_{0})=d(x,Tx)$, then we get
\begin{equation*}
d\left( x,Tx\right) +\varphi \left( Tx\right) \leq kd\left( x,Tx\right)
\end{equation*}%
and so $d\left( x,Tx\right) \leq kd\left( x,Tx\right) $, a contradiction by
the hypothesis $k\in \left( 0,\frac{1}{2}\right) $. If $M(x,x_{0})=\frac{%
2\mu ^{2}}{1+d(x,Tx)}$ then by the definition of the number $\mu $ we have%
\begin{eqnarray*}
\max \left\{ d\left( x,Tx\right) ,\varphi \left( Tx\right) \right\} +\varphi
\left( x\right) &\leq &k\frac{2\mu ^{2}}{1+d(x,Tx)} \\
&\leq &\frac{2k\left( \sqrt{d(x,Tx)}\right) ^{2}}{1+d(x,Tx)} \\
&<&2k(x,Tx)
\end{eqnarray*}%
and hence $d\left( x,Tx\right) <2k(x,Tx)$, a contradiction by the hypothesis
$k\in \left( 0,\frac{1}{2}\right) $. Then, it should be $Tx=x$, that is, $%
x\in Fix\left( T\right) $ in all of the above cases. By the hypothesis $%
\varphi \left( x\right) \leq d\left( Tx,x\right) $, we have $\varphi \left(
x\right) =0$ for all $x\in C_{x_{0},\mu }$. This implies $x\in Fix\left(
T\right) \cap Z_{\varphi }$ for all $x\in C_{x_{0},\mu }$. Consequently, we
find $C_{x_{0},\mu }\subset Fix\left( T\right) \cap Z_{\varphi }$ and hence,
the circle $C_{x_{0},\mu }$ is a $\varphi $-fixed circle of $T$.
\end{proof}

\begin{theorem}
\label{thm:212} Let $(X,d)$ be a metric space, the number $\mu $ be defined
as in $($\ref{the number mu}$)$ and $T:X\rightarrow X$ be a generalized type
$3$ $\varphi _{x_{0}}$-contraction with the point $x_{0}\in X$ and the given
function\ $\varphi :X\rightarrow \left[ 0,\infty \right) $. If $x_{0}\in
Z_{\varphi }$ and the inequalities%
\begin{equation*}
d\left( Tx,x_{0}\right) \leq \mu \text{,}
\end{equation*}%
\begin{equation*}
\varphi \left( x\right) \leq d\left( Tx,x\right)
\end{equation*}%
hold for all $x\in D_{x_{0},\mu }$, then the disc $D_{x_{0},\mu }$ is a $%
\varphi $-fixed disc of $T$.
\end{theorem}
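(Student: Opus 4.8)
The plan is to follow the proof of Theorem \ref{thm:211} essentially verbatim, replacing the circle $C_{x_{0},\mu }$ everywhere by the disc $D_{x_{0},\mu }$, and to verify that every inequality used in the circle case survives this replacement. First I would establish that $x_{0}\in Fix(T)\cap Z_{\varphi }$. This step makes no reference to the radius, so it is identical to the corresponding step in Theorem \ref{thm:211}: assuming $d(Tx_{0},x_{0})>0$, the defining inequality (\ref{eq:29}) together with the hypothesis $x_{0}\in Z_{\varphi }$ and the identity $M(x_{0},x_{0})=d(x_{0},Tx_{0})$ forces $d(x_{0},Tx_{0})\leq kd(x_{0},Tx_{0})$, contradicting $k\in \left( 0,\frac{1}{2}\right) $; hence $Tx_{0}=x_{0}$ and $\varphi \left( x_{0}\right) =0$.

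Next I would dispose of the degenerate case $\mu =0$, for which $D_{x_{0},\mu }=\left\{ x_{0}\right\} $ and the conclusion is immediate from the previous step. For $\mu >0$ I would fix an arbitrary $x\in D_{x_{0},\mu }$ with $Tx\neq x$ and reproduce the central estimate. The key observation is that the bound on $M(x,x_{0})$ used in Theorem \ref{thm:211} rests only on the facts $d(x,x_{0})\leq \mu $, $d(x,Tx_{0})=d(x,x_{0})\leq \mu $ (because $Tx_{0}=x_{0}$), $d(Tx,x_{0})\leq \mu $ (the standing hypothesis), and $d(x_{0},Tx_{0})=0$; none of these uses the equality $d(x,x_{0})=\mu $ that holds on the circle, only the inequality $d(x,x_{0})\leq \mu $, which is exactly what membership in the disc provides. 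Consequently the same estimate $M(x,x_{0})\leq \max \left\{ \mu ,d(x,Tx),\frac{2\mu ^{2}}{1+d(x,Tx)}\right\} $ remains valid for every $x\in D_{x_{0},\mu }$.

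With this estimate in hand the case analysis is unchanged. If $M(x,x_{0})=d(x,Tx)$, then (\ref{eq:29}) yields $d(x,Tx)\leq kd(x,Tx)$, a contradiction; if $M(x,x_{0})=\frac{2\mu ^{2}}{1+d(x,Tx)}$, then the definition of $\mu $ gives $\mu ^{2}\leq d(x,Tx)$ and hence $d(x,Tx)<2kd(x,Tx)$, again contradicting $k\in \left( 0,\frac{1}{2}\right) $. Therefore $Tx=x$ for every $x\in D_{x_{0},\mu }$, and the hypothesis $\varphi \left( x\right) \leq d(Tx,x)=0$ then forces $\varphi \left( x\right) =0$, so $x\in Z_{\varphi }$. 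This gives $D_{x_{0},\mu }\subseteq Fix(T)\cap Z_{\varphi }$, that is, $D_{x_{0},\mu }$ is a $\varphi $-fixed disc of $T$.

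I do not expect a genuine obstacle, since the passage from circle to disc only relaxes $d(x,x_{0})=\mu $ to $d(x,x_{0})\leq \mu $, and every step already used the latter merely as an upper bound. The one point that warrants a moment's care is checking that the fractional term of $M(x,x_{0})$ is still dominated by $\frac{2\mu ^{2}}{1+d(x,Tx)}$ when $d(x,x_{0})$ is strictly smaller than $\mu $; this is immediate because, after substituting $Tx_{0}=x_{0}$, that term equals $\left[ \frac{d(x,x_{0})+d(x_{0},Tx)}{1+d(x,Tx)}\right] d(x,x_{0})$, whose numerator and outer multiplier are both monotone in $d(x,x_{0})$, so shrinking $d(x,x_{0})$ can only decrease it.
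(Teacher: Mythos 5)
Your proposal is correct and follows exactly the route the paper takes: the paper's own proof of Theorem \ref{thm:212} simply states that it is similar to that of Theorem \ref{thm:211}, and your expansion --- observing that every estimate there uses only $d(x,x_{0})\leq \mu$ rather than the equality $d(x,x_{0})=\mu$ --- is precisely the intended adaptation. Your closing remark about the monotonicity of the fractional term in $M(x,x_{0})$ correctly pinpoints the only place where the relaxation from circle to disc needs checking.
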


\begin{proof}
The proof is similar to the proof of Theorem \ref{thm:211}.
\end{proof}

Finally, we give an example of a self-mapping $T$ such that $T$ satisfies
all conditions of Theorem \ref{thm:21}, Theorem \ref{thm:23}, Theorem \ref%
{thm:25}, Theorem \ref{thm:27}, Theorem \ref{thm:29} and Theorem \ref%
{thm:211}.

\begin{example}
Let $X=\left\{ -6,-4,-2,0,1,2,4,5\right\} \cup \left[ 6,\infty \right) $
with the usual metric $d\left( x,y\right) =\left\vert x-y\right\vert $ and
consider the self-mapping $T:X\rightarrow X$ defined by%
\begin{equation*}
Tx=\left\{
\begin{array}{ccc}
x & , & x\neq 5 \\
1 & ; & x=5%
\end{array}%
\right.
\end{equation*}%
and the function $\varphi :$ $X\rightarrow \left[ 0,\infty \right) $ defined
by
\begin{equation*}
\varphi \left( x\right) =\left\{
\begin{array}{ccc}
x^{5}-20x^{3}+64x & ; & x\in X-\left\{ 1,5\right\} \\
0 & ; & x\in \left\{ -6,1,5\right\}%
\end{array}%
\right. .
\end{equation*}%
We have%
\begin{eqnarray*}
\rho &=&\inf \left\{ d\left( Tx,x\right) :x\in X,x\neq Tx\right\} \\
&=&\left\vert 5-1\right\vert =4
\end{eqnarray*}%
and
\begin{eqnarray*}
\mu &=&\inf \left\{ \sqrt{d\left( Tx,x\right) }:x\in X,x\neq Tx\right\} \\
&=&\sqrt{4}=2.
\end{eqnarray*}%
Observe that $Fix\left( T\right) =X-\left\{ 5\right\} $, $Z_{\varphi
}=\left\{ -6,-4,-2,0,1,2,4,5\right\} $ and $Fix\left( T\right) \cap
Z_{\varphi }=\left\{ -6,-4,-2,0,1,2,4\right\} $.

Now, we show that $T$ is a type $1$ $\varphi _{x_{0}}$-contraction with the
point $x_{0}=0$ and $k=\frac{9}{10}$. Indeed, we have
\begin{equation*}
\max \left\{ \left\vert 5-1\right\vert ,0,0\right\} =4\leq \frac{9}{10}\max
\left\{ \left\vert 5-0\right\vert ,0,0\right\} =\frac{45}{10}
\end{equation*}%
for $x=5$. Clearly, conditions of Theorem \ref{thm:21} are satisfied by $T$.
We get
\begin{equation*}
C_{0,4}=\left\{ -4,4\right\} \subset Fix\left( T\right) \cap Z_{\varphi }.
\end{equation*}%
Hence, the circle $C_{0,4}$ is a $\varphi $-fixed circle of $T$.

$T$ is a generalized type $1$ $\varphi _{x_{0}}$-contraction with the point $%
x_{0}=-4$ and $k=\frac{1}{4}$. Indeed, we have%
\begin{eqnarray*}
M(5,4) &=&\max \left\{ d(5,-4),d(5,1),d(-4,-4),\left[ \frac{d(5,-4)+d(-4,1)}{%
1+d(5,1)+d(-4,-4)}\right] d(5,-4)\right\} \\
&=&\max \left\{ 9,4,0,\frac{126}{5}\right\} =\frac{126}{5}
\end{eqnarray*}%
and so,%
\begin{equation*}
\max \left\{ \left\vert 5-1\right\vert ,0,0\right\} =4\leq \frac{1}{4}\max
\left\{ \frac{126}{5},0,0\right\} =\frac{63}{10}.
\end{equation*}

Clearly, $T$ also satisfies the conditions of Theorem \ref{thm:23} and the
circle $C_{-4,2}=\left\{ -6,-2\right\} $ is another $\varphi $-fixed circle
of $T$.

Similarly, it is easy to verify that $T$ is a type $2$ $\varphi _{x_{0}}$%
-contraction with the point $x_{0}=0$ and $k=\frac{9}{10};$ a generalized
type $2$ $\varphi _{x_{0}}$-contraction with the point $x_{0}=-4$ and $k=%
\frac{1}{4};$ a type $3$ $\varphi _{x_{0}}$-contraction with the point $%
x_{0}=0$ and $k=\frac{9}{10};$ a generalized type $2$ $\varphi _{x_{0}}$%
-contraction with the point $x_{0}=-4$ and $k=\frac{1}{4}$. The discs $%
D_{0,4}=\left\{ -4,-2,0,1,2,4\right\} $ and $D_{-4,2}=\left\{
-6,-4,-2\right\} $ are $\varphi $-fixed discs of $T$.
\end{example}

\section{\textbf{Conclusions and Future Work}}

We have given some solutions to a recent open problem related to the
geometric properties of $\varphi $-fixed points of self-mappings of a metric
space in the non-unique fixed point cases. We have seen that any zero of a
given function will produce a fixed circle (resp. fixed disc) contained in
the fixed point set of a self-mapping on a metric space. Considering the
examples given in the paper, we have deduce that a $\varphi $-fixed circle
(resp. $\varphi $-fixed disc) need not to be unique. Hence, the
investigation of the uniqueness condition(s) of a $\varphi $-fixed circle
(resp. $\varphi $-fixed disc) can be considered as a future scope of this
paper.

On the other hand, theoretical fixed point results and examples of
self-mappings are important tools in the study of neural networks. The
typical form of a partitioned real valued activation function is the
following:%
\begin{equation*}
f(x)=\left\{
\begin{array}{ccc}
f_{0}(x) & ; & x<0 \\
f_{1}(x) & ; & x\geq 0%
\end{array}%
\right. ,
\end{equation*}%
where $f_{0}(x)$ and $f_{2}(x)$ are local functions for positive and
negative regions (see \cite{Lee} for more details). Many activation
functions such as LReLU and SELU are in this form. The typical form of more
complicated cases is the following:%
\begin{equation}
f(x)=\left\{
\begin{array}{ccc}
f_{0}(x) & ; & x<x_{0} \\
f_{1}(x) & ; & x_{0}<x\leq x_{1} \\
\cdots &  &  \\
f_{n-1}(x) & ; & x_{n-2}<x\leq x_{n-1} \\
f_{n}(x) & ; & x_{n-1}<x%
\end{array}%
\right. .  \label{general}
\end{equation}

Now, we propose two examples of real and complex valued activation functions
of which fixed point sets contain a $\varphi $-fixed circle.

\begin{example}
Let $X=%
\mathbb{R}
$ with the usual metric. We define an example of a real valued activation
function using the general form $($\ref{general}$)$ by%
\begin{equation*}
Tx=\left\{
\begin{array}{ccc}
\left\vert x\right\vert +1 & ; & x<0 \\
x & ; & 0<x\leq 4 \\
x+1 & ; & 4<x\leq 8 \\
x+2 & ; & x>8%
\end{array}%
\right. .
\end{equation*}%
Notice that we have
\begin{equation*}
Fix\left( T\right) =\left( 0,4\right] \text{ and }\rho =\inf \left\{ d\left(
Tx,x\right) :x\in
\mathbb{R}
,x\neq Tx\right\} =1.
\end{equation*}%
If we define the function $\varphi :%
\mathbb{R}
\rightarrow \left[ 0,\infty \right) $ by%
\begin{equation*}
\varphi \left( x\right) =\left\{
\begin{array}{ccc}
\left\vert x\right\vert & ; & x<0 \\
0 & ; & 0<x\leq 3 \\
8-x & ; & 3<x\leq 8 \\
x & ; & x>8%
\end{array}%
\right. ,
\end{equation*}%
we get
\begin{equation*}
Z_{\varphi }=\left( 0,3\right] \cup \left\{ 8\right\} \text{ and }Fix\left(
T\right) \cap Z_{\varphi }=\left( 0,3\right] .\text{ }
\end{equation*}%
The fixed points of $T$ belonging in the interval $\left( 0,3\right] $ are
special because of the reason that they are also zeros of the function $%
\varphi $. The circle $C_{2,1}=\left\{ 1,3\right\} $ $($resp. the disc $%
D_{2,1}=\left[ 1,3\right] )$ contained in the set $Fix\left( T\right) \cap
Z_{\varphi }$ is a $\varphi $-fixed circle $($resp. $\varphi $-fixed disc$)$
of $T$.
\end{example}

This approach can also be used to define complex activation functions as we
have seen in the following example.

\begin{example}
For example, let $X=%
\mathbb{C}
$ with the usual metric and define the self-mapping $S$ by%
\begin{equation*}
Sz=\left\{
\begin{array}{ccc}
-\overline{z} & ; & x<-4 \\
z+1 & ; & -4<x\leq -2 \\
z & ; & -2<x\leq 1 \\
z-1 & ; & x>1%
\end{array}%
\right. .
\end{equation*}%
We have
\begin{equation*}
Fix\left( S\right) =\left\{ z=x+iy\in
\mathbb{C}
:-2<x\leq 1\right\} \text{ }
\end{equation*}%
and%
\begin{equation*}
\text{ }\rho =\inf \left\{ d\left( Sz,z\right) :z\in
\mathbb{C}
,z\neq Sz\right\} =1.
\end{equation*}%
Define the function $\varphi :%
\mathbb{C}
\rightarrow \left[ 0,\infty \right) $ by%
\begin{equation*}
\varphi \left( z\right) =\left\{
\begin{array}{ccc}
\left\vert z\right\vert -1 & ; & \left\vert z\right\vert \geq 1 \\
\left\vert z\right\vert & ; & \left\vert z\right\vert <1%
\end{array}%
\right. .
\end{equation*}%
We find%
\begin{equation*}
Z_{\varphi }=\left\{ z\in
\mathbb{C}
:\left\vert z\right\vert =1\right\} \cup \left\{ 0\right\} \text{ and }%
Fix\left( T\right) \cap Z_{\varphi }=\left\{ z\in
\mathbb{C}
:\left\vert z\right\vert =1\right\} \cup \left\{ 0\right\} .
\end{equation*}%
The circle $C_{0,1}=\left\{ z\in
\mathbb{C}
:\left\vert z\right\vert =1\right\} $ $($resp. the disc $D_{0,1}=\left\{
z\in
\mathbb{C}
:\left\vert z\right\vert \leq 1\right\} )$ is a $\varphi $-fixed circle $($%
resp. $\varphi $-fixed disc$)$ of $S$.
\end{example}

Thus, these kind general activation functions can be used in the study of
neural networks to obtain more properties with a geometric approach.

\textbf{Acknowledgement.} 
This work is supported by the Scientific Research Projects Unit of Bal\i
kesir University under the project number 2020/019.

\end{document}